\newcommand{\Ker}{\mathrm{Ker}}
\newcommand{\Hom}{\mathrm{Hom}}
\newcommand{\ind}{\mathrm{ind}}
\newcommand{\Tor}{\mathrm{Tor}}
\newcommand{\Stab}{\mathrm{Stab}}
\newcommand{\GL}{\mathrm{GL}}
\newcommand{\id}{\mathrm{id}}
\newcommand{\Sub}{\mathrm{Sub}}
\newcommand{\bg}{(\hspace{-0.06cm}(}
\newcommand{\jg}{)\hspace{-0.06cm})}
\newcommand{\bs}{[\hspace{-0.04cm}[}
\newcommand{\js}{]\hspace{-0.04cm}]}
\newtheorem{thm}{Theorem}
\newtheorem{pro}[thm]{Proposition}
\newtheorem{lem}[thm]{Lemma}
\newtheorem{cor}[thm]{Corollary}
\theoremstyle{definition}
\newtheorem*{rem}{Remark}
\begin{document}
\title{Exactness of the reduction on \'etale modules}
\author{Gergely Z\'abr\'adi}
\date{\today}
\maketitle

\begin{abstract}
We prove the exactness of the reduction map from \'etale $(\varphi,\Gamma)$-modules over completed localized group rings of compact open subgroups of unipotent $p$-adic algebraic groups to usual \'etale $(\varphi,\Gamma)$-modules over Fontaine's ring. This reduction map is a component of a functor from smooth $p$-power torsion representations of $p$-adic reductive groups (or more generally of Borel subgroups of these) to $(\varphi,\Gamma)$-modules. Therefore this gives evidence for this functor---which is intended as some kind of $p$-adic Langlands correspondence for reductive groups---to be exact. We also show that the corresponding higher $\Tor$-functors vanish. Moreover, we give the example of the Steinberg representation as an illustration and show that it is acyclic for this functor to $(\varphi,\Gamma)$-modules whenever our reductive group is $\GL_{d+1}(\mathbb{Q}_p)$ for some $d\geq 1$.
\end{abstract}

\section{Introduction}

\subsection{Colmez' work}
In recent years it has become increasingly clear that some kind of $p$-adic version of the local Langlands conjectures should exist. However, a precise formulation is still missing. It is all the more remarkable that Colmez has recently managed to establish such a correspondence between $2$-dimensional $p$-adic Galois representations of $\mathbb{Q}_p$ and continuous irreducible unitary $p$-adic representations of $\GL_2(\mathbb{Q}_p)$. In fact, Colmez \cite{Co1,Co2} constructed a functor from smooth torsion $P$-representations to \'etale $(\varphi,\Gamma)$-modules where $P$ is the standard parabolic subgroup of $\GL_2(\mathbb{Q}_p)$. Whenever we are given a unitary $\GL_2(\mathbb{Q}_p)$-representation $V$, we may find a $\GL_2(\mathbb{Q}_p)$-invariant lattice $L$ inside it. Hence we can take the restriction to $P$ of the reduction $L/p^mL$ mod $p^m$ for some positive integer $m$ and pass to $(\varphi,\Gamma)$-modules using Colmez' functor. The $(\varphi,\Gamma)$-module corresponding to the initial representation of $\GL_2(\mathbb{Q}_p)$ will be the projective limit of these $(\varphi,\Gamma)$-modules when $m$ tends to infinity. The miracle is that whenever we started with an irreducible supercuspidal $\GL_2$-representation in characteristic $p$ the resulting $(\varphi,\Gamma)$-module will be $2$-dimensional and hence correspond to a $2$-dimensional modulo $p$ Galois representation of the field $\mathbb{Q}_p$. The image of $1$-dimensional and pricipal series representations is, however, $0$ and $1$ dimensional, respectively (see Thm.\ 10.7 in \cite{Vi2} and Thm.\ 10.4 in \cite{P}).

\subsection{The Schneider-Vigneras functors}
Even more recently, Schneider and Vigneras \cite{SV} managed to generalize
Colmez' functor to general $p$-adic reductive groups. Their context is the
following. Let $G$ be the group of $\mathbb{Q}_p$-points of a
$\mathbb{Q}_p$-split connected reductive group over $\mathbb{Q}_p$ whose
centre is also assumed to be connected. To review their construction we fix a
Borel subgroup $P=TN$ with split torus $T$ and unipotent radical $N$. We also
fix an appropriate compact open subgroup $N_0$ which gives rise to the
`dominant' submonoid $T_+:=\{t\in T\mid tN_0t^{-1}\subseteq N_0\}$ in $T$. On
the one side we consider the abelian category $\mathcal{M}_{o-tor}(P)$ of all
smooth $o$-torsion representations of the group $P$ where $o$ is the ring of
integers in a fixed finite extension $K/\mathbb{Q}_p$. On the other side a
monoid ring $\Lambda(P_+)$ is introduced for the monoid $P_+:=N_0T_+$ and we
denote the category of all (left unital) $\Lambda(P_+)$-modules by
$\mathcal{M}(\Lambda(P_+))$. Such a module $M$ is called \'etale if every
$t\in T_+$ acts, informally speaking, with slope zero on $M$. The universal
$\delta$-functor $V\mapsto D^i(V)$ for $i\geq 0$ from $\mathcal{M}_{o-tor}(P)$
to the category $\mathcal{M}_{et}(\Lambda(P_+))$ of \'etale
$\Lambda(P_+)$-modules is constructed the following way. $D^i$ are the derived
functors of a contravariant functor $D\colon \mathcal{M}_{o-tor}(P)\rightarrow
\mathcal{M}(\Lambda(P_+))$ which is not exact in the middle, but takes
surjective, resp.\ injective maps to injective, reps.\ surjective maps. (Hence
$D\subsetneq D^0$ in general.) Note that the $\Lambda(P_+)$-module $D(V)$ is
not \'etale in general, but $D^i(V)$ always lies in
$\mathcal{M}_{et}(\Lambda(P_+))$ for any $i\geq 0$. The modules $D^i(V)$ are not expected to have good properties in general. This is why it is natural to pass to some topological localization $\Lambda_{\ell}(P_{\star})$ of the group ring $\Lambda(P_{\star})$ of a submonoid $P_{\star}$ of $P_+$ generated by $P_0$, $\varphi$, and $\Gamma$. The corresponding abelian category $\mathcal{M}_{et}(\Lambda_{\ell}(P_{\star}))$ of \'etale $\Lambda_{\ell}(P_{\star})$-modules is a generalization of Fontaine's $(\varphi,\Gamma)$-modules. Indeed, whenever $G=\GL_2(\mathbb{Q}_p)$ (in this case we denote by $S_{\star}$ the standard monoid inside $\GL_2(\mathbb{Q}_p)$ and note that $N_0\cong\mathbb{Z}_p$) then the objects that are finitely generated over the smaller localized ring $\Lambda_{\ell}(N_0)\cong\Lambda_{F}(\mathbb{Z}_p)$ are exactly Fontaine's $(\varphi,\Gamma)$-modules. This construction leads to the universal $\delta$-functor $D^i_{\ell}(V)$. The fundamental open question in \cite{SV} is for which class of $P$-representations $V$ are the modules $D^i_{\ell}(V)$ finitely generated over $\Lambda_{\ell}(N_0)$. Moreover, with the help of a Whittaker type functional $\ell$ one may pass to the category $\mathcal{M}_{et}(\Lambda_F(S_{\star}))$ for the standard monoid $S_{\star}$ in $\GL_2(\mathbb{Q}_p)$. This way one obtains a $\delta$-functor $D^i_{\Lambda_F(S_{\star})}$ from $\mathcal{M}_{o-tor}(P)$ to the category of not necessarily finitely generated $(\varphi,\Gamma)$-modules \`a la Fontaine. For the group $G=\GL_2(\mathbb{Q}_p)$ Colmez' original functor coincides with $D^0_{\Lambda_F(S_{\star})}$ and the higher $D^i_{\Lambda_F(S_{\star})}$ vanish.

\subsection{Outline of the paper}
The aim of this short note is to investigate the exactness properties of the functors constructed by Schneider and Vigneras \cite{SV}. Whenever $G\neq\GL_2(\mathbb{Q}_p)$ then the reduction map
\begin{equation*}
\ell\colon\Lambda_{\ell}(N_0)\twoheadrightarrow\Lambda_F(\mathbb{Z}_p)
\end{equation*}
has a nontrivial kernel and hence is not flat since $\Lambda_{\ell}(N_0)$ has
no zero divisors and therefore any flat module is torsion-free. However, the extra
\'etale $\varphi$-structure allows us to show that the reduction
functor from \'etale $\varphi$-modules over $\Lambda_{\ell}(N_0)$
to \'etale $\varphi$-modules over $\Lambda_F(\mathbb{Z}_p)$
induced by $\ell$ is still exact if we restrict ourselves to
\emph{pseudocompact} $\Lambda_{\ell}(N_0)$-modules which includes
those finitely generated. The proof relies on the existence of a descending
separated filtration of (two-sided) ideals $J_n$ of $\Lambda_{\ell}(N_0)$ such that
$\varphi(J_n)\subseteq J_{n+1}$. In
section \ref{30} we use this to show that in fact the higher
$\Tor$-functors
$\Tor^i_{\Lambda_{\ell}(N_0)}(\Lambda_F(\mathbb{Z}_p),M)$ vanish
for $i\geq 1$ whenever $M$ is a pseudocompact \'etale
$\varphi$-module over $\Lambda_{\ell}(N_0)$.

In section \ref{31} we investigate the example of the Steinberg
representation $V_{St}$. We show that in this case we have
$D^0(V_{St})=D(V_{St})$ and, in particular $D^0(V_{St})$ is
finitely generated over $\Lambda_{\ell}(N_0)$. Moreover, we prove
that all the higher $D^i(V_{St})$ vanish for $i\geq 1$. This is
the first known example of a smooth $o$-torsion $P$-representation
with finitely generated $D^0_{\ell}$ and with known $D^i_{\ell}$
for all $i\geq0$ apart from those for $\GL_2(\mathbb{Q}_p)$. Hence $V_{St}$ is acyclic for the functor
$D_{\ell}$ and also for the functor $D_{\Lambda_F(S_{\star})}$ by
the first part of the paper. It also follows that the functor in
the other direction from $\mathcal{M}_{et}(\Lambda(P_+))$ to
$\mathcal{M}_{o-tor}(P)$ sends $D^0(V_{St})$ back to $V_{St}$. We
expect that the method of computing $D^i(V_{St})$ for $i\geq 0$
generalizes to a wider class of smooth $o$-torsion
$P$-representations. For technical reasons we restrict ourselves in this section to the general linear group $\GL_{d+1}(\mathbb{Q}_p)$ with $d\geq 1$. The case $\GL_2(\mathbb{Q}_p)$ is also formally included---however, the functor $D$ is known \cite{Co1,Co2} to be exact in this case.

\section{Preliminaries and notations}

\subsection{Basic notations}

We are going to use the notations of \cite{SV}, but for the convenience of the reader we recall them here, as well. Let $G$ be the group of $\mathbb{Q}_p$-rational points of a $\mathbb{Q}_p$-split connected reductive group over $\mathbb{Q}_p$. Assume further that the centre of this reductive group is also connected. We fix a Borel subgroup $P=TN$ in $G$ with maximal split torus $T$ and unipotent radical $N$. Let $\Phi^+$ denote, as usual, the set of roots of $T$ positive with respect to $P$ and let $\Delta\subseteq\Phi^+$ be the subset of simple roots. For any $\alpha\in\Phi^+$ we have the root subgroup $N_{\alpha}\subseteq N$. We recall that $N=\prod_{\alpha\in\Phi^+}N_{\alpha}$ for any total ordering of $\Phi^+$. Let $T_0\subseteq T$ be the maximal compact subgroup. We fix a compact open subgroup $N_0\subseteq N$ which is totally decomposed, ie.\ $N_0=\prod_{\alpha}N_0\cap N_{\alpha}$ for any total ordering of $\Phi^+$. Then $P_0:=T_0N_0$ is a group. We introduce the submonoid $T_+\subseteq T$ of all $t\in T$ such that $tN_0t^{-1}\subseteq N_0$, or equivalently, such that $\alpha(t)$ is integral for any $\alpha\in\Delta$. Then $P_+:=N_0T_+=P_0T_+P_0$ is obviously a submonoid of $P$.

We also fix a finite extension $K/\mathbb{Q}_p$ with ring of integers $o$, prime element $\pi$, and residue class field $k$. For any profinite group $H$ let $\Lambda(H):=o\bs H\js$, resp.\ $\Omega(H):=k\bs H\js=\Lambda(H)/\pi\Lambda(H)$ be the Iwasawa algebra of $H$ with coefficients in $o$, resp.\ $k$.

\subsection{The functors $D$ and $D^i$}

By a representation we will always mean a linear action of the group (or monoid) in question in a torsion $o$-module $V$. It is called smooth if the stabilizer of each element in $V$ is open in the group. We put $V^*:=\Hom_o(V,K/o)$ the Pontryagin dual of $V$ which is a compact linear-topological $o$-module. Following \cite{SV} we define
\begin{equation*}
D(V):=\varinjlim_{M} M^*
\end{equation*}
where $M$ runs through all the generating $P_+$-subrepresentations of $V$. Whenever $V$ is compactly induced it is equipped with an action of the ring $\Lambda(P_+)$ which is by definition the image of the natural map
\begin{equation*}
\Lambda(P_0)\otimes_{o[P_0]}o[P_+]\rightarrow \varprojlim_{Q}o[Q\setminus P_+]
\end{equation*}
where $Q$ runs through all open normal subgroups $Q\subseteq P_0$ which satisfy $bQb^{-1}\subseteq Q$ for any $b\in P_+$ (cf.\ Proposition 3.4 in \cite{SV}). The $\Lambda(P_+)$-modules $D^i(V)$ for a general smooth $P$-representation $V$ and $i\geq 0$ are obtained as the cohomology groups $D^i(V):=h^i(D(\mathcal{I}_{\bullet}(V)))$ for some resolution
\begin{equation*}
\mathcal{I}_{\bullet}(V)\colon\dots\rightarrow \ind_{P_0}^{P}(V_n)\rightarrow\dots\rightarrow\ind_{P_0}^{P}(V_0)\rightarrow V\rightarrow0
\end{equation*}
of $V$ by compactly induced representations. This is independent of the choice
of the resolution by Corollary 4.4 in \cite{SV}. Note that $D(V)\subsetneq D^0(V)$ in general.

\subsection{The ring $\Lambda_{\ell}(N_0)$}

As in \cite{SV} we fix once and for all isomorphisms of algebraic groups
\begin{equation*}
\iota_{\alpha}\colon N_{\alpha}\overset{\cong}{\rightarrow}\mathbb{Q}_p
\end{equation*}
for $\alpha\in\Delta$, such that
\begin{equation*}
\iota_{\alpha}(tnt^{-1})=\alpha(t)\iota_{\alpha}(n)
\end{equation*}
for any $n\in N_{\alpha}$ and $t\in T$. Since $\prod_{\alpha\in\Delta}N_{\alpha}$ is naturally a quotient of $N/[N,N]$ we now introduce the group homomorphism
\begin{equation*}
\ell:=\sum_{\alpha\in\Delta}\iota_{\alpha}\colon N\rightarrow\mathbb{Q}_p.
\end{equation*}
Moreover, for the sake of convenience we normalize the $\iota_{\alpha}$ such that
\begin{equation*}
\iota_{\alpha}(N_0\cap N_{\alpha})=\mathbb{Z}_p
\end{equation*}
for any $\alpha$ in $\Delta$. In particular, we then have $\ell(N_0)=\mathbb{Z}_p$. We put $N_1:=\Ker(\ell_{\mid N_0})$. The group homomorphism $\ell$ also induces a map
\begin{equation*}
\Lambda(N_0)\twoheadrightarrow\Lambda(\mathbb{Z}_p)
\end{equation*}
which we still denote by $\ell$. By \cite{CFKSV} the multiplicatively closed subset $S:=\Lambda(N_0)\setminus(\pi,\Ker(\ell))$ is a left and right Ore set in $\Lambda(N_0)$ and we may define the localization $\Lambda(N_0)_S$ of $\Lambda(N_0)$ at $S$. We define the ring $\Lambda_{\ell}(N_0):=\Lambda_{N_1}(N_0)$ as the completion of $\Lambda(N_0)_S$ with respect to the ideal $(\pi,\Ker(\ell))\Lambda(N_0)_S$. This is a strict-local ring with maximal ideal $(\pi,\Ker(\ell))\Lambda_{\ell}(N_0)$. Moreover, it is pseudocompact (c.f.\ Thm 4.7 in \cite{SVe}).

\subsection{Generalized $(\varphi,\Gamma)$-modules}
Now since we assume that the centre of $G$ is connected, the quotient $X^*(T)/\bigoplus_{\alpha\in\Delta}\mathbb{Z}\alpha$ is free. Hence we find a cocharacter $\xi$ in $X_*(T)$ such that $\alpha\circ\xi=\id_{\mathrm{G}_m}$ for any $\alpha$ in $\Delta$. It is injective and uniquely determined up to a central cocharacter. We fix once and for all such a $\xi$. It satisfies
\begin{equation*}
\xi(\mathbb{Z}_p\setminus\{0\})\subseteq T_+
\end{equation*}
and
\begin{equation*}
\ell(\xi(a)n\xi(a^{-1}))=a\ell(n)
\end{equation*}
for any $a$ in $\mathbb{Q}_p^{\times}$ and $n$ in $N$. Put $\Gamma:=\xi(1+p^{\epsilon(p)}\mathbb{Z}_p)$ and $\varphi:=\xi(p)$.

The group $\Gamma$ and the semigroup generated by $\varphi$ naturally act on the ring $\Lambda_{\ell}(N_0)$. Hence we may define $(\varphi,\Gamma)$-modules (resp.\ $\varphi$-modules) over $\Lambda_{\ell}(N_0)$ as $\Lambda_{\ell}(N_0)$-modules together with a commuting and compatible action of $\varphi$ and $\Gamma$ (resp.\ just a compatible action of $\varphi$). The notion of $(\Lambda_{\ell}(N_0),\Gamma,\varphi)$-module refers to $(\varphi,\Gamma)$-modules that are \emph{finitely generated} over $\Lambda_{\ell}(N_0)$.
We call a $\varphi$-module $M$ \'etale if the map
\begin{eqnarray*}
\Lambda_{\ell}(N_0)\otimes_{\varphi}M&\rightarrow& M\\
\nu\otimes m&\mapsto&\nu\varphi_M(m)
\end{eqnarray*}
is bijective.

The map $\ell$ induces a $\varphi$- and $\Gamma$-equivariant ring homomorphism
\begin{equation*}
\Lambda_{\ell}(N_0)\twoheadrightarrow\Lambda_F(\mathbb{Z}_p)
\end{equation*}
onto Fontaine's ring $\Lambda_F(\mathbb{Z}_p)$ which is the $p$-adic completion of the Laurent series ring $o\bs T\js[T^{-1}]$. Hence it gives rise to a functor from (\'etale) $(\varphi,\Gamma)$-modules over $\Lambda_{\ell}(N_0)$ to not necessarily finitely generated (\'etale) $(\varphi,\Gamma)$-modules over $\Lambda_F(\mathbb{Z}_p)$. We may restrict this functor to pseudocompact (or less generally to finitely generated) \'etale modules. The main result of this short note is that this restriction is exact.

\section{Exactness of reduction on pseudocompact modules}

\subsection{A $p$-valuation on $N_0$}

In this section we are going to define a $p$-valuation $\omega$ on the group
$N_0$ which we will restrict to $N_1$ in order to produce ideals $J_n$ in $\Lambda_{\ell}(N_0)$. Since $N_0$ is
totally decomposed we can fix topological generators
$n_{\alpha}$ of $N_0\cap N_{\alpha}$ for any $\alpha$ in $\Delta$
such that $\ell(n_{\alpha})=1$. Further, we fix topological
generators $n_{\beta}$ of $N_0\cap N_{\beta}$ for each
$\beta\in\Phi^+\setminus\Delta$. Now we define a $p$-valuation $\omega$ on $N_0$ as follows. Any $\beta$ in
$\Phi^+$ can be written as a positive integer combination
$\beta=\sum_{\alpha\in\Delta}m_{\alpha\beta}\alpha$ of simple roots
$\alpha$. We denote by $m_{\beta}:=\sum_{\alpha}m_{\alpha\beta}$ the degree of
$\beta\circ\xi$ which is a positive integer and is equal to $1$ if and only if
$\beta$ lies in $\Delta$. Further, we fix a total ordering $<$ of $\Phi^+$
such that whenever $m_{\alpha}<m_{\beta}$ for roots $\alpha,\beta$ in $\Phi^+$ then also
$\alpha<\beta$. As $N_0$ is totally decomposed we may write any element $g$ in
$N_0$ uniquely as a product
\begin{equation*}
g=\prod_{\alpha\in\Phi^+}n_{\alpha}^{g_{\alpha}}
\end{equation*}
where $g_{\alpha}$ are in $\mathbb{Z}_p$ and
the product is taken in the ordering $<$ of $\Phi^+$ defined above. We put 
\begin{equation*}
\omega(g):=\min_{\alpha\in\Phi^+}m_{\alpha}(v_p(g_{\alpha})+1)
\end{equation*}
for any $1\neq g$ in $N_0$, and $\omega(1):=\infty$. Here $v_p$ denotes the
additive $p$-adic valuation on $\mathbb{Z}_p$. 

\begin{lem}\label{11}
The function $\omega$ is a $p$-valuation on $N_0$. In other words, we have
\begin{enumerate}[$(i)$]
\item $\omega(gh^{-1})\geq \min(\omega(g),\omega(h))$.
\item $\omega(g^{-1}h^{-1}gh)\geq \omega(g)+\omega(h)$.
\item $\omega(g^p)=\omega(g)+1$.
\end{enumerate}
\end{lem}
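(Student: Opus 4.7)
The strategy is to verify each of the three parts by unpacking the standard decomposition $g=\prod_{\alpha\in\Phi^+}n_\alpha^{g_\alpha}$ and invoking the Chevalley commutator formula for the root subgroups, combined with the additivity $m_{i\alpha+j\beta}=im_\alpha+jm_\beta$ of the height function. The choice of an ordering $<$ on $\Phi^+$ compatible with the heights is what allows the rearrangement tails to be controlled.

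For $(i)$, expand $g$ and $h$ in the standard form and bring $gh^{-1}$ into the standard form by moving each inverse factor into place via Chevalley's formula; each commutation of $n_\alpha^a$ past $n_\beta^b$ produces extra factors in root subgroups $N_\gamma$ with $\gamma=i\alpha+j\beta$, $i,j\geq 1$. The simple-root coordinates of $gh^{-1}$ are just $g_\alpha-h_\alpha$, controlled by the ultrametric inequality for $v_p$. For each non-simple contribution of type $a^ib^j$ in $N_\gamma$, the key estimate
\begin{equation*}
m_\gamma\bigl(iv_p(a)+jv_p(b)+1\bigr)=\bigl(im_\alpha+jm_\beta\bigr)\bigl(iv_p(a)+jv_p(b)+1\bigr)\geq\min\bigl(m_\alpha(v_p(a)+1),\,m_\beta(v_p(b)+1)\bigr)
\end{equation*}
is immediate from $i,j\geq 1$. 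Part $(ii)$ follows from the same expansion applied to $g^{-1}h^{-1}gh$: this commutator vanishes in the abelianization $N/[N,N]$, so all of its simple-root coordinates are zero, and for each non-simple coordinate the same inequality with ``$\min$'' replaced by the sum $m_\alpha(v_p(a)+1)+m_\beta(v_p(b)+1)$ yields $\omega([g,h])\geq\omega(g)+\omega(h)$.

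For $(iii)$, apply the Hall--Petresco identity to the standard decomposition of $g$ to get $g^p=\bigl(\prod_\alpha n_\alpha^{pg_\alpha}\bigr)\cdot c$, where $c$ is a product of iterated commutators in the factors $n_\alpha^{g_\alpha}$. Iterating $(ii)$ bounds $\omega(c)\geq p\omega(g)$, which exceeds $\omega(g)+1$ by the defining lower bound $\omega(g)>1/(p-1)$ for a $p$-valuation; thus the commutator tail does not contribute to the minimum defining $\omega(g^p)$. The leading factor shifts $v_p(g_\alpha)$ to $v_p(g_\alpha)+1$ in each root subgroup, and the compatibility of the ordering with the height function then allows one to conclude that the weighted minimum increases by exactly $1$.

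\textbf{Main obstacle.} The most delicate point is the equality (as opposed to the inequality) in $(iii)$: one must verify that the minimum defining $\omega(g^p)$ is attained at a coordinate whose weighted valuation increases by exactly $1$ rather than by more. This is a careful bookkeeping issue that rests on the precise interplay between the weights $m_\alpha$, the Chevalley structure constants, and the choice of total ordering on $\Phi^+$; everything else is driven by the additivity of $m$ and is comparatively routine.
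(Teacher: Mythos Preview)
Your approach to $(i)$ and $(ii)$ is essentially the paper's: both rest on the Chevalley commutator formula (Prop.\ 8.2.3 in \cite{Sp}) together with the height estimate
\[
m_{i\alpha+j\beta}\bigl(v_p(c_{ij}a^ib^j)+1\bigr)\ \geq\ (im_\alpha+jm_\beta)\bigl(iv_p(a)+jv_p(b)+1\bigr)\ \geq\ m_\alpha(v_p(a)+1)+m_\beta(v_p(b)+1),
\]
which simultaneously yields the $\min$-bound needed for $(i)$ and the sum-bound needed for $(ii)$. The paper packages the rearrangement for $(i)$ as an explicit triple induction (reduce $h$ to a single root factor $n_\alpha^{h_\alpha}$, descending induction on $\alpha$, induction on the number of factors of $g$), and then uses $(i)$ to reduce $(ii)$ to the single-factor case; your sketch describes the same collection process more informally. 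One small omission in your sketch of $(i)$: for non-simple $\beta$ the coordinate of $gh^{-1}$ has a ``direct'' part coming from $g_\beta$ and $h_\beta$ in addition to the commutation tails, and you should say why that part is also bounded by $\min(\omega(g),\omega(h))$ (it is, by the ultrametric inequality for $v_p$).

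For $(iii)$ your instinct that the \emph{equality} is the delicate point is exactly right --- in fact the equality $\omega(g^p)=\omega(g)+1$ fails in general. Take $g=n_\beta$ for any non-simple $\beta$: then $g^p=n_\beta^{\,p}$ lies in the single root subgroup $N_\beta$, so $\omega(g^p)=m_\beta\cdot 2=\omega(g)+m_\beta>\omega(g)+1$. What survives, and what the paper actually uses afterwards (for the filtration $N_{0,n}$ and the inclusion $\varphi N_{1,n}\varphi^{-1}\subseteq N_{1,n+1}$), is only the inequality $\omega(g^p)\geq\omega(g)+1$. Your Hall--Petresco route does give this inequality, but your bound $\omega(c)\geq p\,\omega(g)$ on the commutator tail is too optimistic and not what $(ii)$ yields: the collection process produces commutator words of every weight $k\geq 2$, so iterating $(ii)$ only gives $\omega(c)\geq 2\,\omega(g)$. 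That is still enough, since $\omega(g)\geq 1$ forces $2\omega(g)\geq\omega(g)+1$; combined with $\omega\bigl(\prod_\alpha n_\alpha^{pg_\alpha}\bigr)\geq \min_\alpha\bigl(\omega(n_\alpha^{g_\alpha})+m_\alpha\bigr)\geq\omega(g)+1$ and $(i)$, you get $\omega(g^p)\geq\omega(g)+1$. (Equivalently, and closer to the paper's one-line ``clear from $(i)$ and $(ii)$'': the quotient $N_{0,n}/N_{0,n+1}$ is abelian by $(ii)$, so it suffices to check the inequality on the single-root generators, where it is immediate.) So your argument is correct for the inequality once the $p\omega(g)$ is replaced by $2\omega(g)$; the equality you flagged as an obstacle is simply not true.
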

\begin{proof}
For the proof of $(i)$ we are going to use triple induction. At first by
induction on the number of non-zero coordinates among
$(h_{\alpha})_{\alpha\in\Phi}$ we are reduced to the case when $h$ is of the
form $n_{\alpha}^{h_{\alpha}}$ for some $\alpha$ in $\Phi^+$. Let now $g$ be
of the form $g=\prod_{k=1}^rn_{\beta_k}^{g_{\beta_k}}$ with
$\beta_1<\beta_2<\cdots<\beta_r$ in $\Phi^+$. If
$\beta_r\leq \alpha$ then $(i)$ is trivially satisfied. On the other hand, if
$\beta_r>\alpha$ then we write 
\begin{equation*}
\prod_{k=1}^rn_{\beta_k}^{g_{\beta_k}}\cdot n_{\alpha}^{-h_{\alpha}}=\prod_{k=1}^{r-1}n_{\beta_k}^{g_{\beta_k}}n_{\alpha}^{-h_{\alpha}}\cdot
n_{\beta_r}^{g_{\beta_r}}[n_{\beta_r}^{g_{\beta_r}},n_{\alpha}^{-h_{\alpha}}].
\end{equation*} 
Now we use
(descending) induction on $\alpha$ in the chosen ordering of $\Phi^+$ and suppose 
that the statement $(i)$ is true for any $g$ and any $h'$ of the form
$h'=n_{\alpha'}^{h'_{\alpha'}}$ with $\alpha'>\alpha$. For this we remark that the
set $\Phi^+$ is finite and totally ordered. Note that for
any $\alpha<\beta$ in $\Phi^+$ we have 
\begin{equation}\label{12}
[n_{\beta}^{g_{\beta}},n_{\alpha}^{-h_{\alpha}}]=\prod_{i\beta+j\alpha\in\Phi^+;
  i,j>0}n_{i\beta+j\alpha}^{c_{\beta,\alpha,i,j}g_{\beta}^{i}(-h_{\alpha})^j}
\end{equation}
by the commutator formula in Proposition 8.2.3 in \cite{Sp}. Here the
constants $c_{\beta,\alpha,i,j}$ a priori only lie in $\mathbb{Q}_p$ but since
our parametrization and the fact that $N_0$ is a subgroup of $N$ they are
forced to be in $\mathbb{Z}_p$. Moreover, we have $m_{i\beta+j\alpha}\geq
m_{\beta}+m_{\alpha}> m_{\alpha}$, so we may apply the inductional hypothesis
for $n_{\beta_r}^{g_{\beta_r}}$ and for all the terms on the right hand side
of \eqref{12} (with the choice $\beta:=\beta_r$) in order to obtain
\begin{equation}\label{14}
\omega(gh^{-1})\geq\min\left(\omega\left(\prod_{k=1}^{r-1}n_{\beta_k}^{g_{\beta_k}}n_{\alpha}^{-h_{\alpha}}\right),
\omega(n_{\beta_r}^{g_{\beta_r}}),\omega\left([n_{\beta_r}^{g_{\beta_r}},n_{\alpha}^{-h_{\alpha}}]\right)\right).
\end{equation}
On the other hand, we compute
\begin{eqnarray}
\omega(n_{i\beta+j\alpha}^{c_{\beta,\alpha,i,j}g_{\beta}^{i}(-h_{\alpha})^j})=m_{i\beta+j\alpha}(v_p(c_{\beta,\alpha,i,j}g_{\beta}^{i}
(-h_{\alpha})^j)+1)\geq\label{13}\\
\geq (m_{\beta}+m_{\alpha})(v_p(g_{\beta})+v_p(h_{\alpha})+1)\geq\omega(n_{\beta}^{g_{\beta}})+\omega(n_{\alpha}^{h_{\alpha}})\geq
\min(\omega(n_{\beta}^{g_{\beta}}),\omega(n_{\alpha}^{h_{\alpha}})).\notag
\end{eqnarray}
Hence combining \eqref{14} and \eqref{13} (with the choice $\beta:=\beta_r$) we
are done by a third induction on $r$. 

Since we know $(i)$ it suffices to check $(ii)$ in the case
$g=n_{\beta}^{g_{\beta}}$ and
$h=n_{\alpha}^{h_{\alpha}}$. For these we are done by \eqref{12} and
\eqref{13}. The assertion $(iii)$ is clear from the definition using $(i)$ and
$(ii)$. 
\end{proof}

\subsection{The ideals $J_n$}

In view of Lemma \ref{11} we define for each positive integer $n$ the normal
subgroup $N_{0,n}$ in $N_0$ as the set of elements
$g$ in $N_0$ with $\omega(g)\geq n$. Further we put $N_{1,n}:=N_1\cap
N_{0,n}$. In particular, $N_{1,1}=N_1$ and $N_{0,1}=N_0$. We define
$J_n(\Lambda(N_1))$ to be the kernel of the natural surjection
$\Lambda(N_1)\twoheadrightarrow\Lambda(N_1/N_{1,n})$. Moreover, we denote by
$J_n$ the ideal generated by $J_n(\Lambda(N_1))$ in $\Lambda_{\ell}(N_0)$. We
further have the following 

\begin{lem}\label{1}
$N_{1,n}$ is a normal subgroup in $P_0$ for any $n\geq 1$. In particular, $J_n$ is the kernel of the natural surjection from $\Lambda_{\ell}(N_0)=\Lambda_{N_1}(N_0)$ onto $\Lambda_{N_1/N_{1,n}}(N_0/N_{1,n})$. Further, we have $\varphi N_{1,n}\varphi^{-1}\subseteq N_{1,n+1}$. Therefore there is an induced $\varphi$-action on each $\Lambda_{\ell}(N_0)/J_n$ such that the module $J_n/J_{n+1}$ is killed by $\varphi$ for any $n\geq 1$.
\end{lem}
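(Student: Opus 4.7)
The lemma is essentially a book-keeping exercise with the $p$-valuation $\omega$ and the coordinate presentation $g=\prod_\beta n_\beta^{g_\beta}$ from §2. The only genuinely new input beyond Lemma \ref{11} is the numerical identity $\beta(\varphi)=\beta(\xi(p))=p^{m_\beta}$, which comes from $\alpha\circ\xi=\id$ for each simple $\alpha$ together with $\beta=\sum_\alpha m_{\alpha\beta}\alpha$. I will organize the proof in three steps: normality of $N_{1,n}$, identification of $J_n$ as a kernel, and the coordinate computation producing the shift $\omega(\varphi g\varphi^{-1})\geq \omega(g)+1$; the remaining assertions about $\Lambda_\ell(N_0)/J_n$ and $J_n/J_{n+1}$ then follow formally.

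\textbf{Normality.} I will first verify that $N_{0,n}=\{g\in N_0:\omega(g)\geq n\}$ is normal in $N_0$: Lemma \ref{11}(i) yields the subgroup property, while the identity $hgh^{-1}=[h^{-1},g^{-1}]\cdot g$ combined with (ii) gives $\omega(hgh^{-1})\geq \omega(g)$ for every $h\in N_0$. Conjugation by $t\in T_0$ preserves each $T$-stable root subgroup $N_\beta$ and acts on coordinates via $n_\beta^{g_\beta}\mapsto n_\beta^{\beta(t)g_\beta}$ with $\beta(t)\in\mathbb{Z}_p^\times$, so it preserves each $v_p(g_\beta)$ and hence $\omega$. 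Normality of $N_1$ in $N_0$ follows from $[N_0,N_0]\subseteq\Ker(\ell|_{N_0})=N_1$ (since $\ell$ factors through $N/[N,N]$); the $T_0$-stability of $N_1$ is transparent from the coordinate picture. Intersecting, $N_{1,n}=N_1\cap N_{0,n}$ is normal in $P_0=T_0N_0$.

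\textbf{Identification of $J_n$.} With $N_{1,n}$ open and normal in $N_0$, the kernel of $\Lambda(N_1)\twoheadrightarrow\Lambda(N_1/N_{1,n})$ is $N_0$-stable under conjugation, and the two-sided ideal it generates in $\Lambda_\ell(N_0)=\Lambda_{N_1}(N_0)$ is $J_n$ by definition. Both the Ore localization at $S=\Lambda(N_0)\setminus(\pi,\Ker\ell)$ and the subsequent $(\pi,\Ker\ell)$-adic completion commute with passage to the finite quotient modulo the open normal subgroup $N_{1,n}$, which yields the identification $\Lambda_\ell(N_0)/J_n\cong \Lambda_{N_1/N_{1,n}}(N_0/N_{1,n})$.

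\textbf{The $\varphi$-shift and its consequences.} Since each $N_\beta$ is $\varphi$-stable and $\iota_\beta(\varphi n_\beta\varphi^{-1})=\beta(\varphi)\iota_\beta(n_\beta)=p^{m_\beta}\iota_\beta(n_\beta)$, I will obtain
\begin{equation*}
\varphi\Bigl(\prod_\beta n_\beta^{g_\beta}\Bigr)\varphi^{-1}=\prod_\beta n_\beta^{p^{m_\beta}g_\beta}
\end{equation*}
in the same ordering of $\Phi^+$, and hence
\begin{equation*}
\omega(\varphi g\varphi^{-1})=\min_\beta m_\beta\bigl(m_\beta+v_p(g_\beta)+1\bigr)\geq 1+\min_\beta m_\beta(v_p(g_\beta)+1)=\omega(g)+1,
\end{equation*}
using $m_\beta^2\geq 1$. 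Combined with $\ell(\varphi g\varphi^{-1})=p\cdot\ell(g)$ (so $\varphi(N_1)\subseteq N_1$), this yields $\varphi N_{1,n}\varphi^{-1}\subseteq N_{1,n+1}$, and therefore $\varphi(J_n)\subseteq J_{n+1}\subseteq J_n$. The first inclusion produces the induced $\varphi$-endomorphism of each $\Lambda_\ell(N_0)/J_n$, and the strict shift $\varphi(J_n)\subseteq J_{n+1}$ is precisely the statement that $\varphi$ vanishes on $J_n/J_{n+1}$. The only non-routine point is the coordinate computation, which ultimately rests on the positivity $m_\beta^2\geq 1$---exactly the reason for weighting $v_p(g_\beta)+1$ by $m_\beta$ in the definition of $\omega$.
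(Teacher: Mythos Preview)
Your proof is correct and follows essentially the same approach as the paper's own proof: reduce normality of $N_{1,n}$ to that of $N_{0,n}$ via $N_1\trianglelefteq P_0$, use the $p$-valuation axioms for $N_0$-normality and the coordinate action $tn_\beta t^{-1}=n_\beta^{\beta(t)}$ for $T_0$-stability, and obtain the $\varphi$-shift from $\varphi n_\beta\varphi^{-1}=n_\beta^{p^{m_\beta}}$. The paper's proof is considerably terser---it omits the explicit inequality $\omega(\varphi g\varphi^{-1})\geq\omega(g)+1$ and the discussion of $J_n$ as a kernel---so your expanded version is a faithful elaboration rather than a different route.
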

\begin{proof}
Since $N_1$ is normal in $P_0$ it suffices to verify that $N_{0,n}$ is normal
in $P_0$. For this note that $N_{0,n}$ is normal in $N_0$ as $\omega$ is a
$p$-valuation and if $t$ is in $T_0$ then we have
  $tn_{\alpha}t^{-1}=n_{\alpha}^{t_{\alpha}}$ with $t_{\alpha}$ in
  $\mathbb{Z}_p^{\times}$. Hence the first part of the statement. For the
  second part we note that $\varphi n_{\alpha}\varphi^{-1}=n_{\alpha}^{p^{m_\alpha}}$. 
\end{proof}

Note that the Jacobson radical $Jac(\Lambda_{\ell}(N_0))$ is equal to the
ideal $(\pi,J_1)$ by definition of $J_1$. Moreover, any element $g$ in
$N_{1,n\max_{\beta\in\Phi^+}m_{\beta}}$ is a product of $p^n$th powers of
elements in $N_1$, hence $J_{n\max_{\beta\in\Phi^+}m_{\beta}}\subseteq
Jac(\Lambda_{\ell}(N_0))^{n+1}$. Indeed, if $g$ is any element in 
$N_1$ then $g^{p^n}-1=\prod_{j=0}^n\Phi_{p^j}(g)$ (with $\Phi_{p^j}(x)$ being
the $p^j$th cyclotomic polynomial) and $\Phi_{p^j}(g)$ clearly lies in
$Jac(\Lambda_{\ell}(N_0))$ as both $p$ and $g-1$ lie in $Jac(\Lambda_{\ell}(N_0))$. In particular, $\bigcap_n J_n=0$.

Recall that $\Lambda_{\ell}(N_0)$ is a pseudocompact ring (c.f.\
\cite{SVe} Thm. 4.7).

\begin{lem}\label{2}
If $M$ is any pseudocompact module over $\Lambda_{\ell}(N_0)$ then
$J_nM$ and $M/J_nM$ are also pseudocompact in the subspace, resp.
quotient topologies.
\end{lem}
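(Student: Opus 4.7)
The plan is to reduce the lemma to the single statement that $J_n M$ is closed in $M$: with closedness in hand, $J_n M$ inherits a pseudocompact structure as a closed submodule of a pseudocompact module, and $M/J_n M$ with the quotient topology is pseudocompact as the quotient of a pseudocompact module by a closed submodule.

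To establish closedness, I will exhibit $J_n M$ as the image of a pseudocompact module under a continuous map. First I would observe that $N_{1,n}$ is open in $N_1$: the explicit formula for $\omega$ in Lemma \ref{11} expresses $\{g \in N_0 : \omega(g) \geq n\}$ via finitely many open conditions on the coordinates $g_\alpha$, so $N_{1,n}$ is topologically finitely generated, say by $h_1, \ldots, h_s$. Next, using the coset decomposition $\Lambda(N_1) = \bigoplus_j \Lambda(N_{1,n})g_j$ together with the standard fact that the augmentation ideal of $\Lambda(N_{1,n})$ is generated as a right ideal by $\{h_i - 1\}_i$, one obtains $J_n(\Lambda(N_1)) = \sum_{i=1}^s (h_i - 1)\Lambda(N_1)$; tracking this through the Ore localization at $S$ and the subsequent completion should yield $J_n = \sum_{i=1}^s (h_i - 1) \Lambda_\ell(N_0)$ as a right ideal. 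Any element of $J_n M$ is then a finite sum $\sum_k a_k m_k$ with $a_k = \sum_i (h_i - 1) \lambda_{i,k}$, which after reorganization equals $\sum_i (h_i - 1) \bigl(\sum_k \lambda_{i,k} m_k\bigr)$; conversely every such sum lies in $J_n M$. Consequently
\begin{equation*}
J_n M = \Bigl\{ \sum_{i=1}^s (h_i - 1)m_i : m_i \in M \Bigr\},
\end{equation*}
identifying $J_n M$ as the image of the continuous map $\phi\colon M^s \to M$, $(m_i)_i \mapsto \sum_i (h_i-1)m_i$.

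Pseudocompact modules are linearly compact, hence compact as topological spaces; in particular $M^s$ is compact and $M$ is Hausdorff, so $\phi(M^s) = J_n M$ is compact and therefore closed in $M$, which completes the proof. The main obstacle will be verifying the structural claim that $J_n$ is the right ideal generated by the finitely many $h_i - 1$ in $\Lambda_\ell(N_0)$: this amounts to tracking generators through the localization-and-completion construction defining $\Lambda_\ell(N_0)$ and relies on the Noetherian properties of the intermediate Iwasawa algebras. Once this algebraic fact is secured, the topological conclusion is immediate.
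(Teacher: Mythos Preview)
Your reduction to showing that $J_n M$ is closed is correct, and your algebraic description of $J_n M$ as the image of $\phi\colon M^s \to M$, $(m_i)\mapsto \sum_i (h_i-1)m_i$, is fine (the fact that $J_n$ is finitely generated follows already from noetherianity of $\Lambda_\ell(N_0)$, so the explicit generators are not essential). The gap is in the last step: pseudocompact modules over $\Lambda_\ell(N_0)$ are \emph{not} compact topological spaces. The residue field $\Lambda_\ell(N_0)/(\pi,J_1)\cong k\bg t\jg$ is infinite, so already the simple pseudocompact module $k\bg t\jg$ carries the discrete topology and is not compact. Linear compactness is a filter condition on cosets of \emph{submodules}, not on arbitrary closed sets, and does not imply topological compactness. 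Moreover, you cannot repair this by invoking the abelianness of the category of pseudocompact $\Lambda_\ell(N_0)$-modules: your map $\phi$ is not $\Lambda_\ell(N_0)$-linear, since the ring is noncommutative and $(h_i-1)\lambda \neq \lambda(h_i-1)$ in general, so $\phi$ is merely a continuous additive map, and $M$ is not linearly compact as an abelian group.

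The paper's argument circumvents this by first reducing, via Brumer's structure theory and the abelianness of the pseudocompact category, to the case $M=\prod_{i\in I}\Lambda_\ell(N_0)$. For such $M$ one computes directly that $J_n\cdot\prod_{i\in I}\Lambda_\ell(N_0)=\prod_{i\in I}J_n$ because $J_n$ is finitely generated over the noetherian ring $\Lambda_\ell(N_0)$; this product is visibly closed since $J_n$ is closed in $\Lambda_\ell(N_0)$. Note that the finite generation of $J_n$ is exactly what your explicit generators would supply, so your work up to that point is not wasted---but the passage to free modules is what replaces the unavailable compactness argument.
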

\begin{proof}
It suffices to show that $J_nM$ is closed in $M$. By Lemma 1.6 in
\cite{B} and by the fact that the pseudocompact modules form an abelian category (\cite{G} IV.3.\ Thm.\ 3) we are reduced to the case when $M=\prod_{i\in
I}\Lambda_{\ell}(N_0)$ with the product topology. However, in this
case we have $J_n\prod_{i\in I}\Lambda_{\ell}(N_0)=\prod_{i\in
I}J_n$ as $J_n$ is finitely generated ($\Lambda_{\ell}(N_0)$ is
noetherian), and this is closed in the product topology as $J_n$
is closed in $\Lambda_{\ell}(N_0)$ using once again that it is finitely
generated and hence pseudocompact in the subspace topology of $\Lambda_{\ell}(N_0)$.
\end{proof}

\begin{lem}\label{4}
If $M$ is any pseudocompact module over the ring
$\Lambda_{\ell}(N_0)$ then the natural map induces an isomorphism
\begin{equation*}
M\cong\varprojlim_n M/J_nM.
\end{equation*}
\end{lem}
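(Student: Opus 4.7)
My approach is to verify injectivity and surjectivity of the natural map $\phi \colon M \to \varprojlim_n M/J_n M$ separately. The key inputs are: the inclusion $J_{nc} \subseteq \mathfrak{m}^{n+1}$ from the paragraph after Lemma \ref{1} (with $c := \max_{\beta \in \Phi^+} m_\beta$ and $\mathfrak{m}$ the Jacobson radical of $\Lambda_{\ell}(N_0)$), and the pseudocompact presentation $M = \varprojlim_U M/U$, where $U$ runs over the open submodules of $M$ (equivalently, those with $M/U$ of finite length). Any such $M/U$ is annihilated by some $\mathfrak{m}^{k(U)}$, so $J_n M \subseteq \mathfrak{m}^{k(U)} M \subseteq U$ whenever $n \geq c \cdot k(U)$.

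For injectivity, $\ker \phi = \bigcap_n J_n M \subseteq \bigcap_k \mathfrak{m}^k M \subseteq \bigcap_U U = 0$ by the Hausdorff property of $M$. For surjectivity, given a compatible family $(\bar m_n)_n \in \varprojlim_n M/J_n M$, I define $m = (m_U)_U \in \varprojlim_U M/U = M$ by letting $m_U$ be the image of $\bar m_{n(U)}$ under $M/J_{n(U)} M \twoheadrightarrow M/U$, for any choice $n(U) \geq c \cdot k(U)$. Compatibility of $(\bar m_n)_n$ makes $m_U$ independent of the choice of $n(U)$ and makes $(m_U)_U$ compatible in the inverse system.

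It remains to check $\phi(m) = (\bar m_n)_n$. Since $M/J_n M$ is pseudocompact by Lemma \ref{2}, we have $M/J_n M = \varprojlim_U M/(U + J_n M)$, so it suffices to compare the images in each $M/(U + J_n M)$. The image of $m$ is the coordinate $m_{U + J_n M}$, which by construction (using the admissible choice $n(U + J_n M) = n$) is the image of $\bar m_n$ under $M/J_n M \twoheadrightarrow M/(U + J_n M)$; and the image of $\bar m_n$ under the same projection is tautologically the same. The most delicate point, and the reason a direct appeal to $\mathfrak{m}$-adic completeness of $\Lambda_{\ell}(N_0)$ does not suffice, is that $J_n M$ itself need not be open in $M$ (the ring $\Lambda_{\ell}(N_0)/J_n$ is typically not of finite length over $o$); one is therefore forced to route the final comparison through the intrinsic pseudocompact topology on $M/J_n M$ supplied by Lemma \ref{2}.
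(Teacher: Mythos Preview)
Your proof is correct and relies on the same two ingredients as the paper's argument: the closedness of $J_nM$ in $M$ (Lemma~\ref{2}) and the vanishing $\bigcap_n J_nM=0$ coming from $J_{nc}\subseteq\mathfrak{m}^{n+1}$. The paper, however, does not carry out the surjectivity argument by hand; after recording these two facts it simply invokes Gabriel \cite{G}, IV.3 Proposition~10, which states in general that a pseudocompact module is the inverse limit of its quotients by any descending family of closed submodules with trivial intersection. Your argument is essentially an explicit unpacking of that citation: you reconstruct $m\in M=\varprojlim_U M/U$ from the compatible system $(\bar m_n)_n$ and then use the pseudocompactness of $M/J_nM$ to match it with $\bar m_n$. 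What your write-up buys is self-containment; what the paper's buys is brevity.

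One small remark on wording: when you say ``using the admissible choice $n(U+J_nM)=n$'', note that $n$ need not satisfy your stated criterion $n\geq c\cdot k(U+J_nM)$. What you actually need (and have) is only $J_nM\subseteq U+J_nM$, and your independence-of-choice argument goes through verbatim under this weaker hypothesis. It would be cleaner to phrase the admissibility condition for $n(U)$ directly as $J_{n(U)}M\subseteq U$ throughout.
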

\begin{proof}
By Lemma \ref{2} the submodules $J_nM$ are closed, and since
$J_{n\max_{\beta\in\Phi^+}m_{\beta}}\subseteq
Jac(\Lambda_{\ell}(N_0))^{n}$ we have $\bigcap_n J_nM=0$. The
statement follows from IV.3.\ Proposition 10 in \cite{G}.
\end{proof}

\subsection{Main result}

\begin{pro}\label{10}
Let $M$ and $N$ be pseudocompact \'etale $\varphi$-modules over
$\Lambda_{\ell}(N_0)$. Then injective continuous maps (in the pseudocompact topology)
$M\hookrightarrow N$ reduce to injective maps
$M/J_1M\hookrightarrow N/J_1N$ between the $\varphi$-modules over
$\Lambda_F(\mathbb{Z}_p)$.
\end{pro}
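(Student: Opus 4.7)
The plan is to reformulate the proposition as a $\Tor$-vanishing statement and prove it using the filtration property $\varphi(J_n) \subseteq J_{n+1}$ (Lemma~\ref{1}) and the étale structures on $M$ and $N$. Setting $Q := N/M$, injectivity of $M/J_1M \hookrightarrow N/J_1N$ is equivalent to $\Tor_1^{\Lambda_{\ell}(N_0)}(\Lambda_F(\mathbb{Z}_p), Q) = 0$, obtained from the long exact sequence of $(-) \otimes_{\Lambda_{\ell}(N_0)} \Lambda_F(\mathbb{Z}_p)$ applied to $0 \to M \to N \to Q \to 0$. One first checks that $Q$ inherits a pseudocompact étale $\varphi$-module structure: pseudocompactness holds because a continuous injection of pseudocompact (hence compact Hausdorff) modules is automatically a closed embedding, and étaleness of $Q$ follows from a diagram chase using right-exactness of $\Lambda_{\ell}(N_0) \otimes_\varphi (-)$ together with the étale isomorphisms for $M$ and $N$.

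The core step is to prove $\Tor_1^{\Lambda_{\ell}(N_0)}(\Lambda_F(\mathbb{Z}_p), Q) = 0$ for any pseudocompact étale $\varphi$-module $Q$. Using the short exact sequence $0 \to J_1 \to \Lambda_{\ell}(N_0) \to \Lambda_F(\mathbb{Z}_p) \to 0$, this $\Tor$ equals $\ker\bigl(J_1 \otimes_{\Lambda_{\ell}(N_0)} Q \to Q\bigr)$; iterating the étale isomorphism identifies $J_1 \otimes_{\Lambda_{\ell}(N_0)} Q$ with $J_1 \otimes_{\varphi^k} Q$ for every $k \geq 0$. Starting from an element of this kernel, i.e.\ a relation $\sum_i j_i \varphi^k(q_i) = 0$ in $Q$ with $j_i \in J_1$, the étale isomorphism translates it to $\sum_i j_i \otimes q_i = 0$ in $\Lambda_{\ell}(N_0) \otimes_{\varphi^k} Q$. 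The idea is then to use the tensor relation $a \varphi^k(b) \otimes x = a \otimes bx$ combined with $\varphi^k(J_1) \subseteq J_{k+1}$ to bootstrap this into a zero relation already in $J_1 \otimes_{\varphi^k} Q$, modulo a correction of filtration degree $\geq k+1$; letting $k \to \infty$ and using $\bigcap_n J_n = 0$ together with pseudocompactness then forces the original element to vanish.

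\emph{Main obstacle.} The crux is the filtration bookkeeping in the tensor product $\Lambda_{\ell}(N_0) \otimes_{\varphi^k} Q$: one must show that the tensor relation together with $\varphi^k(J_1) \subseteq J_{k+1}$ suffices to absorb the $\Lambda_{\ell}(N_0)$-coefficients into $J_1$-coefficients, up to arbitrarily small error in the pseudocompact topology. A cleaner-sounding variant, which may be what the author actually does, works directly with the inverse system $K_n := \ker(M/J_nM \to N/J_nN)$: by Lemma~\ref{4} and injectivity of $f$, $\varprojlim_n K_n = 0$; if the transition maps $K_{n+1} \to K_n$ are surjective, then the resulting surjection $\varprojlim_n K_n \twoheadrightarrow K_1$ gives $K_1 = 0$, and the surjectivity of transitions is precisely where $\varphi(J_n) \subseteq J_{n+1}$ must be combined with the étale structure on $M$ and $N$ (writing $f(m) = \sum_i j_i \varphi(n_i)$ with $j_i \in J_n$ and adjusting the $n_i$ by elements of $f(M)$ at the cost of an error that is pushed into $J_{n+1}N$ by $\varphi(J_1) \subseteq J_2$).
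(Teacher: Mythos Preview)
Your second variant --- the inverse system $K_n := \ker(M/J_nM \to N/J_nN)$ together with $\varprojlim_n K_n = 0$ --- is exactly the framework the paper uses. The point where your sketch diverges, and where there is a genuine gap, is the surjectivity step. You propose to show $K_{n+1}\to K_n$ is onto by writing $f(m)=\sum_i j_i\varphi(n_i)$ with $j_i\in J_n$ and ``adjusting the $n_i$ by elements of $f(M)$''. But $f$ is only injective, so there is no reason any given $n_i\in N$ can be approximated by something in $f(M)$; the correction term $\sum_i j_i\varphi(n_i - f(m_i))$ has no reason to fall into $J_{n+1}N$, and the inclusion $\varphi(J_1)\subseteq J_2$ does not by itself force this. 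So the hands-on lifting argument, as written, does not go through.

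The paper instead proves the weaker (and sufficient) statement that $K_n\to K_1$ is surjective for every $n$, via the snake lemma: the obstruction is the connecting map $\delta_n\colon K_1\to Y_n$, where $Y_n$ is the cokernel of $J_1M/J_nM\to J_1N/J_nN$. The key observation you are missing is that $K_1$, being the kernel of a map of \emph{\'etale} $\varphi$-modules over $\Lambda_F(\mathbb{Z}_p)$, is itself \'etale over $\Lambda_F(\mathbb{Z}_p)$; hence every $\varphi$-equivariant image $A$ of $K_1$ satisfies $\Lambda_\ell(N_0)\cdot\varphi(A)=A$. On the other hand $Y_n$ carries the filtration induced from $J_kN/J_nN$, and Lemma~\ref{1} gives $\varphi(J_k)\subseteq J_{k+1}$, so $\varphi$ acts as zero on each graded piece. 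If $\delta_n\neq 0$, pick the largest $k$ with $\delta_n(K_1)\subseteq Fil^k(Y_n)$; the induced nonzero map $K_1\to Fil^k/Fil^{k+1}$ would have image generated by $\varphi$ yet killed by $\varphi$, a contradiction. Thus $\delta_n=0$ and $K_n\twoheadrightarrow K_1$; exactness of $\varprojlim$ on pseudocompact modules then gives $0=\varprojlim K_n\twoheadrightarrow K_1$.

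As for your first approach, note that in the paper's logic the $\Tor$-vanishing for a pseudocompact \'etale $Q$ is a \emph{consequence} of this proposition (via the \'etale projective resolution of Proposition~\ref{9}), not a route to it. Your reformulation also needs $Q=N/M$ to be \'etale over $\Lambda_\ell(N_0)$, which requires flatness of $\varphi$ on $\Lambda_\ell(N_0)$ --- plausible, but not something the paper invokes. And the filtration-bootstrapping you outline for $\ker(J_1\otimes_{\varphi^k}Q\to Q)$ runs into the same missing mechanism: knowing $\sum j_i\otimes q_i=0$ in $\Lambda_\ell(N_0)\otimes_{\varphi^k}Q$ does not by itself force vanishing in $J_1\otimes_{\varphi^k}Q$ without precisely the $\Tor_1$ you are trying to kill.
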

\begin{proof}
Let $K_n$ be the kernel of the induced map from $M/J_nM$ to
$N/J_nN$. We assume indirectly that $K_1\neq 0$. We show that the
natural map from $K_{n}$ to $K_{1}$ is surjective for any $n$. For this we are going to use the following commutative diagram with some $X_n$ and $Y_n$.

\begin{equation}\label{3}
\begin{CD}
& & 0 & & 0 & & 0 \\
& & @VVV @VVV @VVV\\
0 @>>> X_n @>>> K_{n} @>>> K_1\\
& & @VVV @VVV @VVV\\
0 @>>> J_1M/J_{n}M @>>> M/J_{n}M @>>> M/J_1M @>>> 0\\
& & @VVV @VVV @VVV\\
0 @>>> J_1N/J_{n}N @>>> N/J_{n}N @>>> N/J_1N @>>> 0\\
& & @VVV\\
& & Y_n\\
& & @VVV\\
& & 0
\end{CD}
\end{equation}

We remark immediately that by Lemma \ref{2} all the modules in the
diagram \eqref{3} are pseudocompact modules over
$\Lambda_{\ell}(N_0)$, and all the maps are continuous in the
pseudocompact topologies. Indeed, the pseudocompact modules form
an abelian category (\cite{G} IV.3.\ Thm.\ 3).

By the snake lemma we obtain the exact sequence

\begin{equation}\notag
0\rightarrow X_n\rightarrow K_{n}\rightarrow
K_1\overset{\delta_n}{\rightarrow} Y_n.
\end{equation}

We claim that there does not exist any nonzero $\varphi$-equivariant $\Lambda_{\ell}(N_0)$-homomorphism from $K_1$ to
$Y_n$. This would show that $K_{n}$ surjects onto $K_1$ for any
$n$. Note that $\Lambda_{\ell}(N_0)$ acts on $K_1$ via its quotient
$\Lambda_F(\mathbb{Z}_p)$ hence we may view $K_1$ as a $\varphi$-module over both rings.
As $\varphi$ is flat over $\Lambda_F(\mathbb{Z}_p)$, \'etale
$\varphi$-modules form an abelian category over $\Lambda_F(\mathbb{Z}_p)$. In 
particular, $K_1$ is \'etale as a $\varphi$-module over
$\Lambda_F(\mathbb{Z}_p)$ since it is the kernel of a homomorphism
between the \'etale modules $M/J_1M$ and $N/J_1N$. We remark that $K_1$ is
not \'etale as a $\varphi$-module over $\Lambda_{\ell}(N_0)$ since $K_1$ is
annihilated by $J_1$ and
$\Lambda_{\ell}(N_0)\otimes_{\varphi,\Lambda_{\ell}(N_0)}K_1$ is not. The
latter is only annihilated by $\varphi(J_1)\subseteq J_2\subsetneq J_1$. So
the map
\begin{equation*}
\Lambda_{\ell}(N_0)\otimes_{\varphi,\Lambda_{\ell}(N_0)}K_1\rightarrow K_1
\end{equation*}
is surjective (since $K_1$ is \'etale over $\Lambda_F(\mathbb{Z}_p)$), but not injective. Therefore if
there is a surjective $\varphi$-equivariant
$\Lambda_{\ell}(N_0)$-homomorphism from $K_1$ to some $\varphi$-module $A$
over $\Lambda_{\ell}(N_0)$
then we also have that $\varphi(A)$ generates $A$ as a
$\Lambda_{\ell}(N_0)$-module. On the other hand, $J_1N/J_nN$
admits the filtration $Fil^k(J_1N/J_nN):=J_kN/J_nN$ for $1\leq
k\leq n$. This induces a filtration $Fil^k(Y_n)$ on $Y_n$ via the above
surjection in \eqref{3}. Let us assume now that $\delta_n$ is nonzero. Then
there is an integer $k<n$ such that $\delta_n(K_1)\subseteq
Fil^k(Y_n)$ but $\delta_n(K_1)\not\subseteq Fil^{k+1}(Y_n)$. Hence
we get a nonzero $\varphi$-equivariant $\Lambda_{\ell}(N_0)$-homomorphism from $K_1$ to $Fil^k(Y_n)/Fil^{k+1}(Y_n)$
which we denote by $\delta_n'$. However, we claim that $\varphi$ acts as zero on the latter which
will contradict to the fact that $\varphi(\delta_n'(K_1))$
generates $\delta_n'(K_1)$. Indeed, we have a surjective
composite map
\begin{equation}\notag
(J_k/J_{k+1})\otimes_{\Lambda_{\ell}(N_0)}N\twoheadrightarrow
J_kN/J_{k+1}N\twoheadrightarrow Fil^k(Y_n)/Fil^{k+1}(Y_n),
\end{equation}
hence $\varphi(Fil^k(Y_n)/Fil^{k+1}(Y_n))=0$ as we have
$\varphi(J_k)\subseteq J_{k+1}$ by Lemma \ref{1}.

Now we have a map from the projective system $(K_n)_n$ to the
projective system $(K_1)_n$ which is surjective on each layer,
hence its projective limit is also surjective by the exactness of
$\varprojlim$ on pseudocompact modules (\cite{G} IV.3.\ Thm.\ 3).
The statement follows from the completeness of $M$ (Lemma
\ref{4}).
\end{proof}

Whenever $M$ and $N$ are finitely generated over
$\Lambda_{\ell}(N_0)$ then they admit a unique pseudocompact
topology (since $\Lambda_{\ell}(N_0)$ is pseudocompact and
noetherian \cite{SVe} Thm. 4.7 and Lemma 4.2(ii)) and any
homomorphism between them is continuous. So we obtain the main result of this paper as corollary of Proposition \ref{10}.
\begin{pro}
The functor from the category of \'etale $(\Lambda_{\ell}(N_0),\Gamma,\varphi)$-modules to the category of \'etale $(\Lambda_{F}(S_0),\Gamma,\varphi)$-modules induced by the natural surjection
\begin{equation*}
\ell\colon\Lambda_{\ell}(N_0)\twoheadrightarrow \Lambda_{F}(\mathbb{Z}_p)
\end{equation*}
is exact.
\end{pro}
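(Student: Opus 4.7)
The plan is to derive the corollary from Proposition \ref{10} combined with the right-exactness of tensor product, with a short topological verification to bridge the two. First I would identify the reduction functor with $M\mapsto \Lambda_F(\mathbb{Z}_p)\otimes_{\Lambda_\ell(N_0)} M = M/J_1M$. This uses that, by definition, the kernel of $\ell\colon\Lambda_\ell(N_0)\twoheadrightarrow\Lambda_F(\mathbb{Z}_p)$ equals $J_1$, and that $\ell$ is $(\varphi,\Gamma)$-equivariant, so the induced $\varphi$- and $\Gamma$-actions on the quotient agree with the ones coming from the ring surjection.

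Next, given a short exact sequence $0\to M'\to M\to M''\to 0$ of étale $(\Lambda_\ell(N_0),\Gamma,\varphi)$-modules, right-exactness of tensor product immediately gives exactness of
\begin{equation*}
M'/J_1M'\to M/J_1M\to M''/J_1M''\to 0
\end{equation*}
in the target category. To upgrade this to a short exact sequence, I would invoke Proposition \ref{10} applied to the inclusion $M'\hookrightarrow M$ to conclude that the leftmost map is also injective.

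To legitimise this appeal I would verify the hypotheses of Proposition \ref{10}: that $M'$ and $M$ are pseudocompact and that the inclusion is continuous. Both follow from the observation already made in the paragraph before the statement, namely that $\Lambda_\ell(N_0)$ is pseudocompact and noetherian (\cite{SVe} Thm.\ 4.7 and Lemma 4.2(ii)); hence every finitely generated $\Lambda_\ell(N_0)$-module carries a unique pseudocompact topology, and every $\Lambda_\ell(N_0)$-homomorphism between two such modules is automatically continuous for these topologies. The substantive content is already contained in Proposition \ref{10}; the corollary is a formal packaging of it with this topological input, and no further obstacle arises.
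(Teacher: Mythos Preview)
Your proposal is correct and matches the paper's own argument: the paper likewise derives the proposition as an immediate corollary of Proposition~\ref{10}, invoking only that finitely generated modules over the pseudocompact noetherian ring $\Lambda_{\ell}(N_0)$ carry a unique pseudocompact topology for which all homomorphisms are continuous. Your write-up is in fact slightly more explicit than the paper (spelling out the identification $\ker\ell = J_1$ and the right-exactness step), but the content is the same.
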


\subsection{Vanishing of higher $\Tor$-functors}\label{30}

Let $M$ be a pseudocompact \'etale $\varphi$-module over
$\Lambda_{\ell}(N_0)$. Then $M/(\pi,J_1)M$ is also a pseudocompact
\'etale $\varphi$-module over the field
$\Lambda_{\ell}(N_0)/(\pi,J_1)\cong k\bg t\jg$. Hence
there is an index set $I$ such that we have an isomorphism of pseudocompact modules
\begin{equation*}
M/(\pi,J_1)M\cong \prod_{i\in I}\Lambda_{\ell}(N_0)/(\pi,J_1)
\end{equation*}
by Lefschetz's Structure Theorem for
linearly compact vector spaces (\cite{L}, p.\ 83 Thm.\ (32.1), see also
\cite{D}). Note that over fields the notion of linearly compact vectorspaces
coincides with the notion of pseudocompact modules. Indeed, pseudocompact
modules over fields are by definition the projective limits of finite dimensional
vectorspaces with the projective limit topology of the discrete topology on
each finite dimensional vectrospace. However, the category of linearly compact
vectorspaces is closed under products and factors by closed subspaces
(properties $c)$ and $d)$ on page 1 of \cite{D}), hence
also under projective limits.
Moreover, we have $(\pi,J_1)=Jac(\Lambda_{\ell}(N_0))$, therefore we
obtain a projective cover of $M$
\begin{equation}
f\colon\prod_{i\in I}\Lambda_{\ell}(N_0)\twoheadrightarrow
M\label{7}
\end{equation}
which is an isomorphism modulo $(\pi,J_1)$. Note that in fact $f$ is a right minimal
morphism. Indeed, whenever $g\colon \prod_{i\in I}\Lambda_{\ell}(N_0)\rightarrow
\prod_{i\in I}\Lambda_{\ell}(N_0)$ is a $\Lambda_{\ell}(N_0)$-homomorphism
such that $f\circ g=f$ then $g$ is the identity modulo
$Jac(\Lambda_{\ell}(N_0))=(\pi,J_1)$. Hence $g$ is invertible by the
completeness of $\prod_{i\in I}\Lambda_{\ell}(N_0)$ with respect to the
$Jac(\Lambda_{\ell}(N_0))$-adic filtration.

In this section we need to assume that $\varphi$ acts continuously
on the pseudocompact module $M$. Note that this is automatic if
$M$ is finitely generated over $\Lambda_{\ell}(N_0)$.

\begin{lem}\label{8}
Let $F=\prod_{i\in I}\Lambda_{\ell}(N_0)$ be a $\varphi$-module
over $\Lambda_{\ell}(N_0)$ with continuous $\varphi$-action. Then $F$ is \'etale if and only if so
is $F/Jac(\Lambda_{\ell}(N_0))F$ over $k\bg t\jg$.
\end{lem}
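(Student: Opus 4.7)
\textbf{Forward direction.} Suppose $F$ is \'etale. Writing $Jac:=Jac(\Lambda_{\ell}(N_0))=(\pi,J_1)$, I would tensor the \'etale isomorphism $\alpha\colon\Lambda_{\ell}(N_0)\otimes_\varphi F\to F$ over $\Lambda_{\ell}(N_0)$ with $\Lambda_{\ell}(N_0)/Jac=k\bg t\jg$ to obtain a bijection $(k\bg t\jg)\otimes_\varphi F\to F/JacF$. The key point is $\varphi(Jac)\subseteq Jac$: indeed $\varphi(\pi)=\pi$ and $\varphi(J_1)\subseteq J_2\subseteq J_1$ by Lemma \ref{1}. Consequently for any $\lambda\in Jac$ and $f\in F$ we have $\bar 1\otimes\lambda f=\overline{\varphi(\lambda)}\otimes f=0$ in $(k\bg t\jg)\otimes_\varphi F$, giving the identification $(k\bg t\jg)\otimes_\varphi F=(k\bg t\jg)\otimes_\varphi (F/JacF)$. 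Under it the reduced map becomes precisely the \'etale map of $F/JacF$ over $k\bg t\jg$, and it is bijective.

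\textbf{Backward direction.} Now assume $F/JacF$ is \'etale over $k\bg t\jg$. Using that completed tensor products commute with infinite products of pseudocompact modules, I identify $\Lambda_{\ell}(N_0)\hat\otimes_\varphi F\cong\prod_i\Lambda_{\ell}(N_0)=F$, so that the \'etale map becomes a continuous $\Lambda_{\ell}(N_0)$-linear endomorphism $\alpha\colon F\to F$. My plan is to prove $\alpha$ bijective by induction along the $Jac$-adic filtration of $F$, which is separated and complete (so that $F=\varprojlim_n F/Jac^nF$) by a variant of Lemma \ref{4}, using noetherianness of $\Lambda_{\ell}(N_0)$ and $\bigcap_n Jac^nF=0$. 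Since $\alpha$ is $\Lambda_{\ell}(N_0)$-linear, it induces quotient maps $\alpha_n\colon F/Jac^nF\to F/Jac^nF$; the base case $\alpha_1$ bijective is exactly the hypothesis. For the inductive step, apply the five lemma to the diagram
\begin{equation*}
\begin{CD}
0 @>>> Jac^nF/Jac^{n+1}F @>>> F/Jac^{n+1}F @>>> F/Jac^nF @>>> 0 \\
@. @VVV @VV\alpha_{n+1}V @VV\alpha_nV \\
0 @>>> Jac^nF/Jac^{n+1}F @>>> F/Jac^{n+1}F @>>> F/Jac^nF @>>> 0.
\end{CD}
\end{equation*}
The right vertical is bijective by the inductive hypothesis; for the left one, noetherianness of $\Lambda_{\ell}(N_0)$ makes $Jac^n/Jac^{n+1}$ a finite-dimensional $k\bg t\jg$-vector space, whence $Jac^nF/Jac^{n+1}F\cong Jac^n/Jac^{n+1}\otimes_{k\bg t\jg}F/JacF$ (finite-dimensional vector spaces tensor well with products), and under this identification $\alpha$ restricts to $\id\otimes\alpha_1$, which is bijective since $\alpha_1$ is. The five lemma then gives $\alpha_{n+1}$ bijective, and passing to the inverse limit yields $\alpha=\varprojlim\alpha_n$ bijective.

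\textbf{Main obstacle.} The delicate technical points will be justifying the identifications $\Lambda_{\ell}(N_0)\hat\otimes_\varphi F\cong F$ and $Jac^nF/Jac^{n+1}F\cong Jac^n/Jac^{n+1}\otimes_{k\bg t\jg}F/JacF$ in a way compatible with the infinite product decomposition $F=\prod_i\Lambda_{\ell}(N_0)$, and confirming that the $Jac$-adic filtration on $F$ is separated and complete. All of these rest on the pseudocompact framework together with noetherianness of $\Lambda_{\ell}(N_0)$; once they are in place, the induction and inverse limit argument go through directly.
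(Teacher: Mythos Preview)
Your forward direction matches the paper's (the paper simply says ``by definition'', while you unpack the identification using $\varphi(Jac)\subseteq Jac$; this is fine).

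For the backward direction your argument is correct but differs from the paper's. You identify $\Lambda_\ell(N_0)\otimes_\varphi F$ with a product $\prod\Lambda_\ell(N_0)$ (this needs $\Lambda_\ell(N_0)$ to be finitely presented over $\varphi(\Lambda_\ell(N_0))$, so that tensoring commutes with the product; it holds because $N_0/\varphi N_0\varphi^{-1}$ is finite and the ring is noetherian), view $\alpha$ as a $\Lambda_\ell(N_0)$-linear endomorphism, and run a five-lemma induction along the $Jac$-adic filtration, using that on each graded piece the map is $\id\otimes\alpha_1$. This works, and your ``main obstacles'' are genuine but surmountable with the noetherian/pseudocompact input you cite.

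The paper instead exploits directly that $F$ is topologically free (hence projective in the pseudocompact category). From the hypothesis, the map \eqref{6} is an isomorphism modulo $Jac$, so its cokernel is pseudocompact and killed by $Jac$, hence zero; thus \eqref{6} is surjective. Projectivity of $F$ gives a continuous section $s$; since \eqref{6} is an isomorphism modulo $Jac$, so is $s$, and the same Nakayama argument shows $s$ is surjective. Hence $s$ is a two-sided inverse. This avoids any identification of the source with a product and any filtration induction: two applications of pseudocompact Nakayama plus one splitting. Your approach is more hands-on and would generalise to situations where the target is not projective but the graded pieces are still controlled; the paper's approach is shorter here precisely because $F$ is already known to be topologically free.
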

\begin{proof}
If $F$ is \'etale then by definition so is
$F/Jac(\Lambda_{\ell}(N_0))F$. Now assume that
$F/Jac(\Lambda_{\ell}(N_0))F$ is \'etale. In other words the map
\begin{equation}
1\otimes\varphi\colon\Lambda_{\ell}(N_0)\otimes_{\varphi,\Lambda_{\ell}(N_0)}F\rightarrow
F\label{6}
\end{equation}
is isomorphism modulo $Jac(\Lambda_{\ell}(N_0))$. Therefore
\eqref{6} is for instance surjective as its cokernel is
pseudocompact and killed by $Jac(\Lambda_{\ell}(N_0))$. On the other hand, since $F$ is topologically free, we have a continuous section of the map \eqref{6}. Since \eqref{6} is an isomorphism modulo $Jac(\Lambda_{\ell}(N_0))$, so is this section. However, by the same argument as above this section also has to be surjective and therefore is an inverse to the map \eqref{6}.
\end{proof}

\begin{pro}\label{9}
Let $M$ be an \'etale pseudocompact $\varphi$-module over
$\Lambda_{\ell}(N_0)$ with continuous $\varphi$-action. Then the
action of $\varphi$ on $M$ can be lifted continuously to $F:=\prod_{i\in
I}\Lambda_{\ell}(N_0)$ via the surjection $f$ in \eqref{7}. Any
such lift makes $F$ an \'etale $\varphi$-module.
\end{pro}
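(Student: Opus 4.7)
The plan is to split the statement into two parts: the existence of a continuous $\varphi$-semi-linear lift $\varphi_F\colon F\to F$ of $\varphi_M$ along $f$, and the automatic étaleness of any such lift. The core idea is to reduce the semi-linear lifting problem to an ordinary $\Lambda_{\ell}(N_0)$-linear one, to which the projectivity of the topologically free pseudocompact module $F=\prod_{i\in I}\Lambda_{\ell}(N_0)$ applies, and then to invoke Lemma \ref{8} for the étaleness of the lift.

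For the existence of $\varphi_F$ I would proceed as follows. Denote by $F_*$ and $M_*$ the pseudocompact $\Lambda_{\ell}(N_0)$-modules obtained from $F$ and $M$ by twisting the action of $\Lambda_{\ell}(N_0)$ via $\varphi$, i.e.\ $\lambda*x:=\varphi(\lambda)x$. Since $\varphi$ is a continuous ring endomorphism, this pullback along $\varphi$ sends pseudocompact modules and continuous linear maps to pseudocompact modules and continuous linear maps. A continuous $\varphi$-semi-linear endomorphism of $F$ is the same datum as a continuous $\Lambda_{\ell}(N_0)$-linear map $F\to F_*$, and under this translation the given semi-linear $\varphi_M$ becomes a continuous linear map $\varphi_M\colon M\to M_*$. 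Moreover, the surjection $f$ of \eqref{7} defines a continuous linear surjection $f_*\colon F_*\twoheadrightarrow M_*$. Since $F$ is a direct product of copies of $\Lambda_{\ell}(N_0)$, it is a projective object in the category of pseudocompact $\Lambda_{\ell}(N_0)$-modules, so the composite $\varphi_M\circ f\colon F\to M_*$ admits a continuous linear lift $F\to F_*$ along $f_*$; read back as a semi-linear map, this yields the required $\varphi_F$ with $f\circ\varphi_F=\varphi_M\circ f$.

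For the étaleness of any such lift I would invoke Lemma \ref{8}: it suffices to check that $F/Jac(\Lambda_{\ell}(N_0))F$ is an étale $\varphi$-module over $k\bg t\jg$. Because \eqref{7} is a projective cover, $f$ descends to an isomorphism $\bar f\colon F/Jac(\Lambda_{\ell}(N_0))F\stackrel{\sim}{\to} M/Jac(\Lambda_{\ell}(N_0))M$, and the relation $f\circ\varphi_F=\varphi_M\circ f$ together with the inclusion $\varphi(Jac(\Lambda_{\ell}(N_0)))\subseteq Jac(\Lambda_{\ell}(N_0))$ (which descends $\varphi_F$ and $\varphi_M$ to the quotients) imply that $\bar f$ intertwines the induced $\varphi$-actions on source and target. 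The right-hand side is noted in the paragraph preceding the proposition to be a pseudocompact étale $\varphi$-module over $k\bg t\jg$, hence the same holds on the left, and Lemma \ref{8} then yields that $F$ itself is étale.

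The only subtle point of the argument is the semi-linear lifting step: projectivity in the pseudocompact category is an a priori statement about $\Lambda_{\ell}(N_0)$-linear maps, and the twist-of-target trick makes it applicable only because the continuity of $\varphi\colon\Lambda_{\ell}(N_0)\to\Lambda_{\ell}(N_0)$ ensures that $F_*$ and $M_*$ remain pseudocompact and that $f_*$ remains continuous. This is precisely why the continuity hypothesis on the $\varphi$-action on $M$ is required.
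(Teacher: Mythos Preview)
Your proof is correct and follows essentially the same route as the paper: both arguments lift $\varphi$ by invoking the projectivity of $F=\prod_{i\in I}\Lambda_{\ell}(N_0)$ in the category of pseudocompact $\Lambda_{\ell}(N_0)$-modules, and both deduce \'etaleness from Lemma~\ref{8} via the identification $F/Jac(\Lambda_{\ell}(N_0))F\cong M/Jac(\Lambda_{\ell}(N_0))M$ coming from the projective cover. The only difference is in packaging the lifting step. You convert the semilinear problem into a linear one by restricting scalars along $\varphi$ and lifting $\varphi_M\circ f\colon F\to M_*$ through $f_*\colon F_*\twoheadrightarrow M_*$; the paper instead stays in the untwisted category by defining the auxiliary $\Lambda_{\ell}(N_0)$-linear map $g\colon F\to M$, $e_i\mapsto\varphi_M(f(e_i))$, lifting $g$ through $f$ to a linear $\varphi_{\mathrm{lin}}\colon F\to F$, and then declaring $\varphi_F(e_i):=\varphi_{\mathrm{lin}}(e_i)$ and extending $\sigma_\varphi$-linearly over the topological basis.

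One small point deserves care: your assertion that restriction along $\varphi$ sends pseudocompact modules to pseudocompact modules is not a consequence of continuity of $\varphi$ alone. What is actually needed is that an artinian $\Lambda_{\ell}(N_0)$-module remains artinian after restriction along $\varphi$, and this uses that $\Lambda_{\ell}(N_0)$ is finite (indeed free of rank $[N_0:\varphi N_0\varphi^{-1}]$) as a module over itself via $\varphi$. That finiteness certainly holds here, but it is the genuine reason the twisted modules $F_*,M_*$ stay in the pseudocompact category; the paper's basis-by-basis construction sidesteps this check by never leaving the untwisted module category.
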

\begin{proof}
Let us define another continuous
$\Lambda_{\ell}(N_0)$-homomorphism
\begin{eqnarray*}
g\colon\prod_{i\in I}\Lambda_{\ell}(N_0)&\rightarrow& M\\
e_i&\mapsto&\varphi(f(e_i)).
\end{eqnarray*}
We need to check that $\lim_{i\in I}\varphi(f(e_i))=0$ in the
pseudocompact topology of $M$ so that $g$ really defines a
continuous homomorphism. This is, however, clear by the continuity
of $\varphi$ and $f$. By the projectivity of $F$ (Lemma 1.6 in
\cite{B}) we obtain a lift $\varphi_{\mathrm{lin}}$
\begin{equation*}
\begin{xy}
  \xymatrix{
      & F \ar[d]^f \\
      F \ar@{.>}[ru]^{\varphi_{\mathrm{lin}}} \ar[r]^g & M
      }
\end{xy}
\end{equation*}
which we define as the linearization of $\varphi$ on $F$. Hence we
define
\begin{equation*}
\varphi(e_i):=\varphi_{\mathrm{lin}}(e_i)
\end{equation*}
and extend it $\sigma_{\varphi}$-linearly and continuously to the
whole $F$. By construction this is a lift of $\varphi_{|M}$. The
\'etaleness follows from Lemma \ref{8} noting that by construction
of \eqref{7} we have
$F/Jac(\Lambda_{\ell}(N_0))F=M/Jac(\Lambda_{\ell}(N_0))M$ and the
latter is \'etale as so is $M$.
\end{proof}

\begin{cor}
For any pseudocompact \'etale $\varphi$-module $M$ over
$\Lambda_{\ell}(N_0)$ with continuous $\varphi$ and any $i\geq1$
we have
\begin{equation*}
\Tor^i_{\Lambda_{\ell}(N_0)}(\Lambda_{\ell}(N_0)/J_1,M)=0.
\end{equation*}
\end{cor}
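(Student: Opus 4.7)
The plan is to resolve $M$ by a topologically free étale $\varphi$-module using Proposition \ref{9}, invoke Chase's theorem to exploit flatness so that the Tor long exact sequence collapses, and then use Proposition \ref{10} for the injectivity after reduction modulo $J_1$; the higher Tor groups are then handled by dimension shifting.

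Concretely, I would apply Proposition \ref{9} to obtain a projective cover $f\colon F\twoheadrightarrow M$ with $F=\prod_{i\in I}\Lambda_{\ell}(N_0)$ and a continuous lift of $\varphi$ making $F$ an étale pseudocompact $\varphi$-module. Let $K:=\ker(f)$. Since $\Lambda_{\ell}(N_0)$ is left noetherian hence left coherent, Chase's theorem guarantees that the direct product $F$ is flat as a left $\Lambda_{\ell}(N_0)$-module, so $\Tor^{j}_{\Lambda_{\ell}(N_0)}(\Lambda_{\ell}(N_0)/J_1,F)=0$ for every $j\geq 1$. The Tor long exact sequence attached to $0\to K\to F\to M\to 0$ therefore collapses to
\begin{equation*}
0\to \Tor^{1}_{\Lambda_{\ell}(N_0)}(\Lambda_{\ell}(N_0)/J_1,M)\to K/J_1K\to F/J_1F\to M/J_1M\to 0,
\end{equation*}
together with dimension-shifting isomorphisms $\Tor^{i}_{\Lambda_{\ell}(N_0)}(\Lambda_{\ell}(N_0)/J_1,M)\cong\Tor^{i-1}_{\Lambda_{\ell}(N_0)}(\Lambda_{\ell}(N_0)/J_1,K)$ for $i\geq 2$.

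The crucial step is then to verify that $K$ is again an étale pseudocompact $\varphi$-module, carrying the continuous $\varphi$ restricted from $F$. For this I would use that $\Lambda_{\ell}(N_0)$ is free of finite rank over its subring $\varphi(\Lambda_{\ell}(N_0))$, a consequence of the decomposition of $N_0$ into finitely many cosets modulo the finite-index subgroup $\varphi N_0\varphi^{-1}$. It follows that $\Lambda_{\ell}(N_0)\otimes_{\varphi,\Lambda_{\ell}(N_0)}(-)$ is an exact functor, and applying it to $0\to K\to F\to M\to 0$ yields a short exact sequence that maps into the original one via the étaleness structure maps $\sigma$; as $\sigma_F$ and $\sigma_M$ are isomorphisms by the étaleness of $F$ and $M$, the five lemma forces $\sigma_K$ to be an isomorphism as well.

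With $K$ étale in hand, Proposition \ref{10} applied to the continuous injection $K\hookrightarrow F$ yields that $K/J_1K\to F/J_1F$ is injective, and the four-term sequence above forces $\Tor^{1}_{\Lambda_{\ell}(N_0)}(\Lambda_{\ell}(N_0)/J_1,M)=0$. Iterating the entire construction with $K$ in place of $M$ and using the dimension-shifting isomorphism, one obtains $\Tor^{i}_{\Lambda_{\ell}(N_0)}(\Lambda_{\ell}(N_0)/J_1,M)=0$ for every $i\geq 1$. The chief difficulty is the étaleness of $K$: it rests on flatness of $\Lambda_{\ell}(N_0)$ over $\varphi(\Lambda_{\ell}(N_0))$, which must be transported from the analogous well-known fact for the Iwasawa algebra $\Lambda(N_0)$ through the $S$-localization and completion defining $\Lambda_{\ell}(N_0)$.
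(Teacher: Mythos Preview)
Your proposal is correct and follows essentially the same route as the paper: lift $\varphi$ to a topologically free cover via Proposition~\ref{9}, use that products of copies of the noetherian ring $\Lambda_{\ell}(N_0)$ are flat, and then invoke Proposition~\ref{10} together with dimension shifting. The paper phrases this as building a full $\varphi$-equivariant projective resolution by iterating Proposition~\ref{9} and then observing that Proposition~\ref{10} forces the reduced complex to remain exact; your step-by-step version via the short exact sequence $0\to K\to F\to M\to 0$ is the same argument unwound, and your explicit verification that $K$ is again \'etale (using that $\Lambda_{\ell}(N_0)$ is free of finite rank over its image under $\varphi$, hence $\Lambda_{\ell}(N_0)\otimes_{\varphi}(-)$ is exact) is precisely the point the paper needs in order to iterate Proposition~\ref{9} and to apply Proposition~\ref{10} at each stage, but leaves implicit.
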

\begin{proof}
By Proposition \ref{9} there is a projective resolution
$(F_i)_{i\in\mathbb{N}}$ of $M$ in the category of pseudocompact
$\Lambda_{\ell}(N_0)$-modules, such that the $F_i$ are \'etale
$\varphi$-modules and the resolution is $\varphi$-equivariant. By
Proposition \ref{10}, the functor
$\Lambda_{\ell}(N_0)/J_1\otimes_{\Lambda_{\ell}(N_0)}\cdot$ is
exact on this resolution. The result follows noting that the
modules $\prod_{i\in I}\Lambda_{\ell}(N_0)$ are flat over
the noetherian ring $\Lambda_{\ell}(N_0)$ as in this case an arbitrary direct product of flat modules is flat again.
\end{proof}

\begin{cor}
Let $M$ be a pseudocompact \'etale module over
$\Lambda_{\ell}(N_0)$ with continuous $\varphi$ such that $\pi
M=0$. Then there exists an index set $I$ such that $M\cong
\prod_{i\in I}\Lambda_{\ell}(N_0)/\pi$. In particular, $M$ is a
projective object in the category of pseudocompact modules over
$\Lambda_{\ell}(N_0)/\pi$.
\end{cor}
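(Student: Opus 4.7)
The strategy is to show that the projective cover of $M$ constructed at the beginning of this section becomes an isomorphism once reduced modulo $\pi$; projectivity of $M$ in pseudocompact $\Lambda_\ell(N_0)/\pi$-modules then follows from Lemma~1.6 in \cite{B}. Write $\bar\Lambda := \Lambda_\ell(N_0)/\pi$. Applying the discussion preceding Proposition~\ref{9} to $M$ (using Lefschetz on the $k\bg t\jg$-vector space $M/(\pi,J_1)M = M/J_1M$), we obtain an index set $I$ together with a $\varphi$-equivariant \'etale projective cover $\tilde f\colon \tilde F = \prod_{i\in I}\Lambda_\ell(N_0)\twoheadrightarrow M$ that is an isomorphism modulo $\mathrm{Jac}(\Lambda_\ell(N_0)) = (\pi,J_1)$. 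Since $\pi M = 0$, the map $\tilde f$ factors through $\bar F := \tilde F/\pi\tilde F = \prod_{i\in I}\bar\Lambda$, giving $\bar f\colon \bar F\twoheadrightarrow M$ which is an isomorphism modulo $J_1$.

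Setting $K := \ker\bar f$, I would apply $\Lambda_\ell(N_0)/J_1\otimes_{\Lambda_\ell(N_0)}\cdot$ to the short exact sequence $0\to K\to\bar F\to M\to 0$. By the previous corollary, $\mathrm{Tor}^1_{\Lambda_\ell(N_0)}(\Lambda_\ell(N_0)/J_1, M) = 0$, so the long Tor sequence collapses to $0\to K/J_1 K\to\bar F/J_1\bar F\to M/J_1 M\to 0$. The right-hand map is an isomorphism by construction, so $K/J_1 K = 0$, i.e., $K = J_1K$. Iterating yields $K = J_1^n K\subseteq J_1^n\bar F$ for every $n\geq 1$.

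It remains to check $\bigcap_n J_1^n\bar F = 0$. Because $\Lambda_\ell(N_0)$ is noetherian, each $J_1^n$ is finitely generated and therefore $J_1^n\bar F = \prod_{i\in I}J_1^n\bar\Lambda$. The pseudocompact topology on $\Lambda_\ell(N_0)$ is the $(\pi,J_1)$-adic topology by its construction as a completion, so the induced topology on $\bar\Lambda$ is the $J_1$-adic topology, and $\bigcap_n J_1^n\bar\Lambda = 0$ by separatedness of pseudocompact rings. Thus $K = 0$, $\bar f$ is an isomorphism, and the projectivity assertion follows from Lemma~1.6 in \cite{B}. The main technical point to justify carefully is the separatedness of the $J_1$-adic topology on $\bar\Lambda$ together with the commutation of the arbitrary product with the finitely generated ideal $J_1^n$; everything else is a routine combination of the Tor-vanishing corollary with a Nakayama-style iteration.
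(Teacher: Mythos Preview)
Your proof is correct and follows the same overall architecture as the paper's: construct the projective cover, reduce modulo $\pi$, and show the kernel $K$ of $\bar f$ vanishes by first proving $K/J_1K=0$. The two proofs diverge in how they reach $K/J_1K=0$ and in how they finish.

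For the exactness step, the paper observes that $K$ is itself an \'etale pseudocompact $\varphi$-module (kernels of maps between \'etale modules are \'etale, since $\varphi$ is flat on $\Lambda_\ell(N_0)$) and then applies Proposition~\ref{10} directly to the injection $K\hookrightarrow F/\pi F$. You instead invoke the $\Tor$-vanishing corollary for $M$; this is equally valid and has the mild advantage that you never need to check $K$ is \'etale, at the cost of citing a result that was itself derived from Proposition~\ref{10} via the projective-resolution machinery.

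For the conclusion $K=0$, the paper is shorter: since $\pi K=0$, the equality $K=J_1K$ reads $K=\mathrm{Jac}(\Lambda_\ell(N_0))K$, and Nakayama for pseudocompact modules kills $K$ in one stroke. Your explicit iteration $K\subseteq\bigcap_n J_1^n\bar F=0$ works, and your justifications (commuting the finitely generated ideal with the product, separatedness of the $J_1$-adic topology on $\bar\Lambda$) are sound, but the pseudocompact Nakayama lemma makes all of that unnecessary.
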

\begin{proof}
By Proposition \ref{9} we obtain a minimal projective cover $F$ of
$M$ with $F$ admitting an \'etale lift of the $\varphi$-action on
$M$. Since $\pi M=0$ this factors through $F/\pi F$ which is also
\'etale in the induced $\varphi$-action. Now we denote by $K$ the
kernel of the map from $F/\pi F$ onto $M$. Then $K$ is also \'etale
as these form an abelian category. Hence by Proposition \ref{10}
we obtain an exact sequence
\begin{equation*}
0\rightarrow K/J_1K\rightarrow F/(\pi,J_1)F\rightarrow
M/J_1M\rightarrow 0.
\end{equation*}
However, the map $F/(\pi,J_1)F\rightarrow M/J_1M$ is an isomorphism
by the construction of $F$ \eqref{7} showing that $K/J_1K=0$
whence $K=0$ as $K$ is pseudocompact.
\end{proof}

\section{An example}\label{31}

In this section we are going to investigate the so called Steinberg representation. For the sake of simplicity (of the Bruhat-Tits building) we let $G$ be $\GL_{d+1}(\mathbb{Q}_p)$ in this section for some $d\geq 1$ and $P$ be its standard Borel subgroup of lower triangular matrices. Recall that the group $P=NT$ acts on $N$ by $(nt)(n')=ntn't^{-1}$. This induces an action of $P$ on the vector space $V_{St}:=C^{\infty}_c(N)$ of $k$-valued locally constant functions with compact support on $N$. It is straightforward to see (cf.\ Example on p.\ 8 in \cite{SV} and \cite{Vi} Lemme 4) that the subspace $M:=C^{\infty}(N_0)$ of locally constant functions on $N_0$ is generating and $P_+$-invariant. Moreover, it is shown in \cite{SV} Lemma 2.6 that we have $D(V_{St})=\Lambda(N_0)/\pi\Lambda(N_0)$. We have the following refinement of this.

\begin{pro}\label{e16}
Let $V_{St}$ be the smooth modulo $p$ Steinberg representation of the group $P$. Then we have $D^0(V_{St})=D(V_{St})=\Lambda(N_0)/\pi\Lambda(N_0)$, and $D^i(V_{St})=0$ for any $i\geq 1$.
\end{pro}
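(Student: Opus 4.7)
Plan. By \cite{SV} Lemma 2.6 we already know that $D(V_{St})=\Omega(N_0)=\Lambda(N_0)/\pi\Lambda(N_0)$, realized via the generating $P_+$-subrepresentation $M=C^\infty(N_0)\subset V_{St}$. The remaining content of the proposition is therefore the identification $D^0(V_{St})=D(V_{St})$ together with the vanishing of $D^i(V_{St})$ for $i\geq 1$. My approach is to exhibit a concrete resolution $\mathcal{I}_\bullet(V_{St})\to V_{St}\to 0$ with $\mathcal{I}_n=\ind_{P_0}^P(W_n)$ compactly induced from $P_0$, and then to compute the complex $D(\mathcal{I}_\bullet)$ directly. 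Both desired statements will follow if this complex has cohomology equal to $\Omega(N_0)$ concentrated in degree~$0$: the vanishing of higher $D^i$ is immediate, and the identity $D^0(V_{St})=D(V_{St})$ emerges from the augmentation.

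The construction of the resolution is the main step and should exploit two structural features of $V_{St}=C_c^\infty(N)$. First, the filtration $V_{St}=\varinjlim_m C^\infty(\varphi^{-m}N_0\varphi^m)$ by $P_0$-stable pieces, each obtained from $M$ by a $\varphi$-conjugation shift. Second, the decomposition $N=\prod_{\alpha\in\Phi^+}N_\alpha$ into root subgroups, combined with the observation that for $\GL_{d+1}$ there are exactly $d$ simple roots. A natural candidate is a \v{C}ech- or Koszul-type complex of length $d$ indexed by subsets $I\subseteq\Delta$, whose terms $\ind_{P_0}^P(W_I)$ are built from $P_0$-subrepresentations of $V_{St}$ associated to the standard parabolics of $P$ determined by $I$. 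The differentials should be the alternating sums coming from inclusions of parabolics; exactness of the augmented complex reduces to a combinatorial assertion about covers of $N$ by conjugates of $N_0$.

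To compute $D$ on each term I would use the explicit description of $D$ on a compactly induced representation from \cite{SV}, which realises $D(\ind_{P_0}^P(W_I))$ as a $\Lambda(P_+)$-module of the form $\Lambda(P_+)\otimes_{\Lambda(P_0)}W_I^\ast$. The differentials on the resolution translate into explicit maps between such tensor products, and one should recognise the resulting object as a Koszul complex over $\Lambda(P_+)$ with Koszul variables indexed by $\Delta$. Its cohomology is then computable: in degree~$0$ it should collapse to $M^\ast=\Omega(N_0)$, and in positive degrees it should vanish.

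The main obstacle will be the verification that the resolution is exact and that the associated $D$-complex is acyclic in positive degrees. This is presumably the ``technical reason'' alluded to in the introduction for restricting to $\GL_{d+1}$: the simple-root combinatorics and the structure of $T_+\cong\mathbb{N}^d$ (modulo the centre) make the Koszul-type complex especially transparent, whereas for a general reductive group the analogous construction is more subtle. Once the exactness of $D(\mathcal{I}_\bullet)$ in positive degrees is in hand, the proposition follows, and as a by-product one recovers the asserted equality $D^0(V_{St})=D(V_{St})=\Lambda(N_0)/\pi\Lambda(N_0)$ directly from the degree-$0$ cohomology.
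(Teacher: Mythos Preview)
Your overall architecture matches the paper's: build a length-$d$ resolution of $V_{St}$ by representations compactly induced from $P_0$ (actually $P_0Z$), observe that each term is $D$-acyclic by the analogue of \cite{SV} Lemma~11.8, and then analyse $D(\mathcal{I}_\bullet)$. The paper's resolution is indeed indexed by subsets $J\subseteq\{1,\dots,d\}$, but the indexing set is the set of non-central vertices of the fundamental chamber in the Bruhat--Tits building (equivalently the fundamental coweights $\varphi_j$), not the simple roots, and the coefficient system assigns to $\eta_J$ the space $C_c^\infty(\bigcap_{j\in J}\varphi_jN_0\varphi_j^{-1})$. Your phrase ``standard parabolics of $P$ determined by $I$'' does not parse ($P$ is already a Borel), and your guess that the pieces should come from simple roots misidentifies the relevant combinatorics. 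Exactness of the resolution itself is obtained in the paper via Grosse--Kl\"onne's local acyclicity criterion on the building, not by a \v{C}ech/Koszul argument on $N$.

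The genuine gap is in the second half of your plan. You propose to compute $D(\ind_{P_0}^P(W_I))$ as $\Lambda(P_+)\otimes_{\Lambda(P_0)}W_I^\ast$ and then ``recognise'' $D(\mathcal{I}_\bullet)$ as a Koszul complex. Neither step is available. First, $D$ is the \emph{direct limit} $\varinjlim_M M^\ast$ over all generating $P_+$-subrepresentations $M$, not a single Pontryagin dual, so the tensor formula you write down does not describe $D$ on an induced representation. Second, and more seriously, the acyclicity of $D(\mathcal{I}_\bullet)$ in positive degrees is precisely the hard part, and it is \emph{not} a Koszul regularity statement over the noncommutative ring $\Lambda(P_+)$. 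The paper instead invokes \cite{SV} Lemma~2.5: one must show that for every generating $P_+$-subrepresentation $M_{q+1}\subset\ind_{P_0Z}^P(V_{q+1})$ there is a generating $M_q\subset\ind_{P_0Z}^P(V_q)$ with $M_q\cap\partial_{q+1}(\ind_{P_0Z}^P(V_{q+1}))\subseteq\partial_{q+1}(M_{q+1})$. This is done by parametrising the relevant $M_q$'s by order-reversing maps $\sigma_0\colon T_+/T_0Z\to\mathbb{N}\cup\{-1\}$ and checking the required inclusion by a second application of Grosse--Kl\"onne's criterion (for $q>0$) and a separate inductive argument along the $\varphi_i$ (for $q=0$). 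None of this is visible from a Koszul viewpoint, and your proposal gives no mechanism for controlling the interaction of the direct limit in $D$ with the boundary maps. You have correctly located where the work lies, but the method you sketch would not carry it out.
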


For the proof of Proposition \ref{e16} we are going to construct an explicit resolution
\begin{equation*}
\mathcal{I}_{\bullet}\colon0\rightarrow\ind_{P_0Z}^P(V_d)\rightarrow\dots\rightarrow\ind_{P_0Z}^P(V_1)\rightarrow\ind_{P_0Z}^P(V_0)\rightarrow V_{St}\rightarrow0
\end{equation*}
of $V_{St}$ using the Bruhat-Tits building of $G$. Here $Z$ denotes the centre of $G$ that will act trivially on each $V_i$ ($0\leq i\leq d$). Since $Z\cong \mathbb{Q}_p^{\times}$, Lemma 11.8 in \cite{SV} generalizes to this case with the same proof, so we have $D^0(\ind_{P_0Z}^P(V_i))=D(\ind_{P_0Z}^P(V_i))$ and $D^i(\ind_{P_0Z}^P(V_i))=0$ for all $0\leq i\leq d$. In particular, we may compute $D^i(V_{St})=h^{i}(D(\mathcal{I}_{\bullet}))$.

Recall that the Bruhat-Tits building $\mathcal{BT}$ of $G$ is the simplicial complex whose vertices are the similarity classes $[L]$ of $\mathbb{Z}_p$-lattices in the vector space $\mathbb{Q}_p^{d+1}$ and whose $q$-simplices are given by families $\{[L_0],\dots,[L_q]\}$ of similarity classes such that
\begin{equation*}
pL_0\subsetneq L_1\subsetneq\dots\subsetneq L_q\subsetneq L_0.
\end{equation*}
Let $\mathcal{BT}_q$ denote the set of all $q$-simplices of $\mathcal{BT}$. We also fix an orientation of $\mathcal{BT}$ with the corresponding incidence numbers $[\eta:\eta']$. We choose a basis $e_0,\dots,e_d$ of $\mathbb{Q}_p^{d+1}$ in which $P$ is the Borel subgroup of lower triangular matrices and denote the origin of $\mathcal{BT}$ by $x_0:=[\sum_{i=0}^d\mathbb{Z}_pe_i]$. Further, for all $1\leq i\leq d$ let $\varphi_i$ be the dominant diagonal matrix $\mathrm{diag}(1,\dots,1,p,\dots,p)$ with $i$ entries equal to $1$ and $d+1-i$ entries equal to $p$ and put $x_i:=\varphi_ix_0$. Then $T_+/T_0Z$ is clearly generated by the elements $\{\varphi_iT_0Z\}_{i=1}^d$ as a monoid. Moreover, for each subset $J=\{j_1<\dots<j_q\}\subseteq\{1,\dots,d\}$ we define the (oriented) $q$-simplex
\begin{equation*}
\eta_J:=\{x_0,x_{j_1},\dots,x_{j_q}\}.
\end{equation*}
Now we define the coefficient system
\begin{equation*}
V_{nt\eta_J}:=C_c^{\infty}\left(nt\left(\bigcap_{j\in J}\varphi_jN_0\varphi_j^{-1}\right)t^{-1}\right)
\end{equation*}
for any $n$ in $N$, $t$ in $T$, and $J\subseteq\{1,\dots,d\}$; and $V_x:=0$ if $\eta\neq b\eta_J$ for any $b$ in $P$ and $J\subseteq\{1,\dots,d\}$. The restriction maps are the natural inclusion maps. Indeed, for any two simplices $\eta_1\subseteq\eta_2$ such that $V_{\eta_2}\neq 0$ we have a $b=nt$ in $P$ such that $\eta_i=b\eta_{J_i}$ for $J_1\subseteq J_2\subseteq\{1,\dots,d\}$ and $i=1,2$ therefore $V_{\eta_2}=ntV_{\eta_{J_2}}$ is naturally contained in $V_{\eta_1}=ntV_{\eta_{J_1}}$ by extending the functions $f$ in $V_{nt\eta_J}$ to the whole $N$ by putting
$f_{\mid N\setminus nt\left(\bigcap_{j\in J}\varphi_jN_0\varphi_j^{-1}\right)t^{-1}}=0$. Later on we will often view elements of $V_{nt\eta_J}$ as functions on $N$ with support in $\mathrm{supp}(V_{nt\eta_J})=nt\left(\bigcap_{j\in J}\varphi_jN_0\varphi_j^{-1}\right)t^{-1}$.

Note that $V_{\eta}$ is either zero or equal to $\bigcap_{x\in\eta\cap\mathcal{BT}^0}V_x$. It might, however, happen that this intersection is nonzero but $V_{\eta}=0$ as $\eta$ is not in the $P$-orbit of $\eta_J$ for any $J\subseteq\{1,\dots,d\}$. We also see immediately that $P$ acts naturally on the coefficient system $(V_{\eta})$ and this action is compatible with the boundary maps. Moreover, we claim
\begin{lem}\label{e1}
We have
\begin{equation}\label{e12}
\bigoplus_{\eta\in\mathcal{BT}_q}V_{\eta}\cong\ind_{P_0Z}^P(V_q)
\end{equation}
with
\begin{equation*}
V_q:=\sum_{\begin{matrix}b_0\in P_0\text{, }|J|=q\\J\subseteq\{1,\dots,d\}\end{matrix}}V_{b_0\eta_J}=\bigoplus_{|J|=q,J\subseteq\{1,\dots,d\}}\bigoplus_{n_0\in N_0/\bigcap_{j\in J}\varphi_jN_0\varphi_j^{-1}}V_{n_0\eta_J}.
\end{equation*}
\end{lem}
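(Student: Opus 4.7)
The plan is to split $\bigoplus_{\eta\in\mathcal{BT}_q}V_\eta$ according to the $P$-orbits of $q$-simplices, compute the stabilizer in $P$ of each orbit representative $\eta_J$, and then assemble the result via transitivity of compact induction. By the vanishing convention defining the coefficient system, only simplices in the orbit $P\cdot\eta_J$ for some $J\subseteq\{1,\ldots,d\}$ with $|J|=q$ contribute to the left-hand side, and a first step is to verify that distinct $J$ yield disjoint $P$-orbits.

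\textbf{Stabilizer computation.} The stabilizer of $x_i=\varphi_i x_0$ in $G$ is $\varphi_i\GL_{d+1}(\mathbb{Z}_p)Z\varphi_i^{-1}$; intersecting with $P$, and using that $\varphi_i\in T$ normalizes $P$, commutes with $T_0Z$, and satisfies $\varphi_i N_0\varphi_i^{-1}\subseteq N_0$, this intersection equals $T_0Z\cdot(\varphi_iN_0\varphi_i^{-1})$. Since the vertices of $\eta_J$ carry pairwise distinct labels in the $\mathbb{Z}/(d+1)\mathbb{Z}$-coloured building and every $b\in P$ shifts labels uniformly (by $v_p(\det b)$), no $b\in P$ can nontrivially permute the vertices of $\eta_J$; thus the set-stabilizer coincides with the pointwise stabilizer, namely
\[
\Stab_P(\eta_J)\;=\;T_0Z\cdot U_J,\qquad U_J:=\bigcap_{j\in J}\varphi_j N_0\varphi_j^{-1}.
\]

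\textbf{Assembly via induction.} Orbit-stabilizer gives a $P$-equivariant identification $\bigoplus_{\eta\in P\cdot\eta_J}V_\eta\cong\ind_{T_0ZU_J}^P V_{\eta_J}$, where the summand attached to the coset $b\,T_0ZU_J$ corresponds to $V_{b\eta_J}$ via the $P$-equivariance of the coefficient system. Because $P_0Z=T_0Z\ltimes N_0$ and $T_0ZU_J=T_0Z\ltimes U_J$ (using that $T_0Z$ normalizes $U_J$, and that $N\cap T_0Z=\{1\}$), one has a natural bijection $P_0Z/T_0ZU_J\cong N_0/U_J$. Transitivity of compact induction then yields
\[
\ind_{T_0ZU_J}^P V_{\eta_J}\;\cong\;\ind_{P_0Z}^P\Bigl(\,\bigoplus_{n_0\in N_0/U_J}V_{n_0\eta_J}\Bigr),
\]
and summing over $J$, while commuting the outer $\ind_{P_0Z}^P$ with the direct sum over $J$, produces the isomorphism with $\ind_{P_0Z}^P V_q$ as claimed.

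\textbf{Main obstacle.} The delicate point is the orbit-disjointness claim: while all $\eta_J$ with $|J|=q$ lie in a common $G$-orbit (their label multisets in $\mathbb{Z}/(d+1)\mathbb{Z}$ agree up to cyclic rotation), the Borel $P$ sees strictly finer orbits, analogous to the Iwahori-Bruhat cells in the affine flag variety of $\GL_{d+1}(\mathbb{Q}_p)$. A proof of disjointness---together with the absence of vertex-permutations in $\Stab_P(\eta_J)$---should proceed by direct computation inside the standard apartment of $T$, where distinct $J$ correspond to distinct $T$-translation classes and the $N$-action does not merge them, or alternatively by invoking the Iwahori-Bruhat decomposition of $\GL_{d+1}(\mathbb{Q}_p)$.
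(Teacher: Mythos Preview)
Your overall architecture is sound and in fact more structural than the paper's: you decompose into $P$-orbits, identify each orbit contribution as a compact induction from the stabiliser, and then use transitivity through $P_0Z$. The paper instead writes down the obvious $P$-map $\ind_{P_0Z}^P(V_q)\to\bigoplus_\eta V_\eta$, observes surjectivity, and reduces injectivity to showing that $b\eta_{J_1}=\eta_{J_2}$ with $b\in P$ forces $b\in P_0Z$ (hence $J_1=J_2$). Both routes hinge on the same two facts: (a) distinct $J$ lie in distinct $P$-orbits, and (b) $\Stab_P(\eta_J)\subseteq P_0Z$, i.e.\ no $b\in P$ permutes the vertices of $\eta_J$ nontrivially.

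The genuine gap is in your label argument for (b). From ``the vertices carry pairwise distinct labels'' and ``$b$ shifts labels uniformly by $v_p(\det b)$'' it does \emph{not} follow that $b$ fixes each vertex: it only follows that the label multiset is preserved under the cyclic shift by $v_p(\det b)$, and for some $J$ this shift can be nonzero. For instance, with $d+1=4$ and $J=\{2\}$ the label set $\{0,2\}\subset\mathbb{Z}/4\mathbb{Z}$ is invariant under the shift by $2$, so your argument does not exclude a $b\in P$ swapping $x_0$ and $x_2$. (No such $b$ exists, but that is what needs proving.) You flag this in your ``main obstacle'' paragraph, yet the stabiliser computation earlier is presented as if complete.

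The paper settles (a) and (b) simultaneously by the following two-line computation, which is far lighter than invoking Iwahori--Bruhat: suppose $b\in P\setminus P_0Z$ with $b\eta_{J_1}=\eta_{J_2}$; since $bx_0\neq x_0$ one has $bx_0=\varphi_i x_0$ and $b\varphi_j x_0=x_0$ for some $i\in J_2$, $j\in J_1$, whence $b=\varphi_i b_0$ with $b_0\in P_0Z$ and $\varphi_i b_0\varphi_j\in P_0Z$; but $\varphi_i b_0\varphi_i^{-1}\in P_0Z$ while $\varphi_i\varphi_j\notin P_0Z$, a contradiction. Plugging this in for your missing step, your argument goes through and the explicit identification $\Stab_P(\eta_J)=T_0Z\cdot U_J$ is a pleasant bonus the paper does not record.
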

\begin{proof}
By construction $V_q$ is a $P_0$-subrepresentation of $\bigoplus_{\eta\in\mathcal{BT}_q}V_{\eta}$ so we clearly have a $P$-equivariant map from the right hand side of \eqref{e12} to the left hand side. Since $V_q$ contains $V_{\eta_J}$ for any $q$-element subset $J$ of $\{1,\dots,d\}$ this map is surjective.

For the injectivity let $b$ be in $P$ with $b\eta_{J_1}=\eta_{J_2}$ for two (not necessarily distinct) subsets $J_1$ and $J_2$ of $\{1,\dots,d\}$. Assume that $b$ does not lie in $P_0Z$. Then we have $bx_0=\varphi_ix_0$ and $b\varphi_jx_0=x_0$ for some $1\leq i,j\leq d$. Hence $b=\varphi_ib_0$ for some $b_0$ in $\Stab_P(x_0)=P_0Z$ with $\varphi_ib_0\varphi_j$ lying also in $P_0Z$. This is a contradiction as $\varphi_ib_0\varphi_i^{-1}$ is in $P_0Z$, but $\varphi_i\varphi_j$ is not. It follows that $\eta_{J_1}$ and $\eta_{J_2}$ are in different $P$-orbits of $\mathcal{BT}$ if $J_1\neq J_2$ (since $\dim_{\mathbb{F}_p}L_{\varphi_ix_0}/pL_0=p^i$ for all $1\leq i\leq d$) and $\Stab_P(\eta_{J})\subseteq P_0Z$. The statement follows.
\end{proof}


\begin{lem}\label{e2}
The coefficient system $(V_\eta)_{\eta}$ defines an acyclic resolution of the representation $V_{St}$, ie.\ $H_0((V_{\eta})_{\eta})=V_{St}$ and $H_i((V_{\eta})_{\eta})=0$ for all $i\geq 1$.
\end{lem}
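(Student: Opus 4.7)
The plan is to exhibit the augmented complex $C_\bullet \to V_{St}$, where $C_q := \bigoplus_{\eta \in \mathcal{BT}_q} V_\eta$, as a filtered colimit (over a shrinking compact open subgroup $U$ of $N$) of direct sums of augmented oriented simplicial chain complexes of certain subcomplexes of $\mathcal{BT}$, and then prove that each of those subcomplexes is contractible.

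First I would unpack the boundary maps. Since the restriction $V_\eta \to V_{\hat\eta_i}$ corresponding to a face $\hat\eta_i \subset \eta$ is nothing but extension by zero (induced by $X_\eta \subseteq X_{\hat\eta_i}$), the differential on $C_q$ sends $f \in V_\eta$ to $\sum_i [\eta:\hat\eta_i]\,f$, viewed in the appropriate $V_{\hat\eta_i}$. The augmentation $C_0 \to V_{St}$ extends $f \in V_x = C_c^\infty(X_x)$ by zero to all of $N$.

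Next I would pass to $U$-invariants for a small compact open subgroup $U \subseteq N$. Each $V_\eta \subseteq V_{St} = C_c^\infty(N)$ is a sub-$N$-representation, so $V_\eta^U$ has a $k$-basis given by characteristic functions of the left $U$-cosets $Un$ that lie inside $X_\eta$. Grouping by $U$-coset yields
\begin{equation*}
\bigl(C_\bullet \to V_{St}\bigr)^U = \bigoplus_{Un \in U\backslash N}\Bigl(C_\bullet^{\mathrm{or}}(\mathcal{BT}(Un);k) \to k\Bigr),
\end{equation*}
where $\mathcal{BT}(Un) := \{\eta \in \mathcal{BT} : V_\eta \neq 0 \text{ and } Un \subseteq X_\eta\}$ is a genuine subcomplex of $\mathcal{BT}$ (closed under faces because $X_\eta \subseteq X_{\eta'}$ for $\eta' \subseteq \eta$). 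Since $V_{St} = \varinjlim_U V_{St}^U$ and similarly for $C_\bullet$, the lemma reduces to showing each $\mathcal{BT}(Un)$ is contractible; by $N$-equivariance $\mathcal{BT}(nU) = n \cdot \mathcal{BT}(U)$, so it suffices to treat the case $n = 1$.

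The hard part will be proving contractibility of $\mathcal{BT}(U)$. I expect this subcomplex to be star-shaped with respect to a ``deepest'' simplex: for $U$ small enough to lie in $\bigcap_{j=1}^d \varphi_j N_0 \varphi_j^{-1}$, the fundamental chamber $\eta_{\{1,\dots,d\}}$ belongs to $\mathcal{BT}(U)$, and one verifies that for every $\eta = g\eta_J \in \mathcal{BT}(U)$ the join $\eta \cup \eta_{\{1,\dots,d\}}$ remains in $\mathcal{BT}(U)$ (using the downward closure of the condition in $J$ together with an analysis of how the $P$-orbit on simplices intersects the support constraint $U \subseteq X_\eta$). For larger $U$ (e.g.\ $U = N_0$) the subcomplex collapses to the single vertex $\{x_0\}$, and the intermediate cases are handled by the same star-shapedness. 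An alternative route is to use the Iwasawa decomposition to identify $\mathcal{BT}(U)$ with a convex region of the standard apartment, yielding contractibility by linear retraction; translating between the $P$-orbit description of simplices and apartment geometry is the main technical obstacle either way.
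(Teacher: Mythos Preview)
Your decomposition idea is natural, but it breaks down at the step where you assert that
\[
\mathcal{BT}(Un)=\{\eta\in\mathcal{BT}:V_\eta\neq0\text{ and }Un\subseteq X_\eta\}
\]
is a subcomplex of $\mathcal{BT}$. Your justification (``$X_\eta\subseteq X_{\eta'}$ for $\eta'\subseteq\eta$'') tacitly assumes $V_{\eta'}\neq0$, and this fails. Concretely, take $|J|\geq 2$ and remove the vertex $bx_0$ from $b\eta_J$: the resulting face $b\{x_{j_1},\dots,x_{j_q}\}$ is \emph{not} of the form $b'\eta_{J'}$ for any $b'\in P$ (e.g.\ for $d=2$ one checks directly that $\{x_1,x_2\}$ lies in no $P$-orbit $P\eta_{\{j'\}}$), so $V$ vanishes there. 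Thus $\mathcal{BT}(Un)$ is not closed under faces, and the complex you actually obtain on the $Un$-component is \emph{not} the oriented simplicial chain complex of any subcomplex of $\mathcal{BT}$: for $q\geq 2$ the boundary of $b\eta_J$ has only $q$ nonzero terms (the $b\eta_{J\setminus\{j\}}$), not $q+1$. Consequently ``contractibility of $\mathcal{BT}(Un)$'' is not the right statement, and your star-shapedness argument cannot work as written either, since for a generic vertex $bx_0\in\mathcal{BT}(U)$ the set $\{bx_0\}\cup\{x_0,\dots,x_d\}$ has $d+2$ elements and is never a simplex of the $d$-dimensional building.

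The paper avoids these difficulties entirely: it does not attempt a global topological argument but instead invokes Grosse--Kl\"onne's local acyclicity criterion for coefficient systems on buildings (Theorem~1.7 of \cite{GK}), which reduces the higher vanishing to checking, for each pointed $(q-1)$-simplex $\hat\eta$ and each stable subset $M_0\subseteq N_{\hat\eta}$, the exactness of a short three-term sequence. After translating by $P$ to $\eta=\eta_J$ and decomposing all the $V_\nu$ according to cosets of $\bigcap_{j=1}^d\varphi_jN_0\varphi_j^{-1}$ (this is where the coset idea you had does enter, but only locally), the sequence becomes a direct sum of elementary exact sequences with constant coefficients. The $H_0$ computation is handled separately by an inductive ``walk towards $x_0$'' argument along edges in $P_+\cdot\{x_0\}$.
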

\begin{proof}
By Lemma \ref{e1} we note immediately that the natural map
\begin{equation}
\bigoplus_{\eta\in\mathcal{BT}_0}V_{\eta}\cong\ind_{P_0Z}^P(V_0)=\ind_{P_0Z}^P(M)\rightarrow V_{St}\label{e3}
\end{equation}
is surjective since $M$ generates $V_{St}$. On the other hand, if an element $f$ in $\ind_{P_0Z}^P(M)$ lies in the kernel of the above map \eqref{e3} then for some $t$ in $T_+$ the support of $tf$ lies in $P_+$. Hence for proving that $f$ lies in the image of $\bigoplus_{\eta\in\mathcal{BT}_1}V_{\eta}$ we may assume that $f$ has support in $P_+$. However, we claim that for any $b$ in $P_+$ and any element $v$ in $V_{bx_0}$ there is an element $v_0$ in $V_{x_0}$ such that $v-v_0$ lies in the image of $\bigoplus_{\eta\in\mathcal{BT}_1}V_{\eta}$. Indeed, if $b=n_0t$ for some $n_0$ in $N_0$ and $t$ in $T_+$ (since $P_+=N_0T_+$) then $v$ has support in $n_0tN_0t^{-1}\subseteq n_0t\varphi_j^{-1}N_0\varphi_jt^{-1}$ for any $j$ with $t\varphi_j^{-1}\in T_+$. Hence $v$ lies in $V_{\{n_0t\varphi^{-1}x_0,n_0tx_0\}}$ and the claim follows by induction on $K=\sum_{i=1}^dk_i$ with $tT_0Z=\prod_{i=1}^d\varphi_i^{k_i}T_0Z$. This shows that $H_0((V_{\eta})_{\eta})=V_{St}$.

For the acyclicity of the resolution $(V_\eta)_{\eta}$ we are going to
use Grosse-Kl\"onne's local criterion \cite{GK}. To recall his
result we need to introduce some terminology. Let $\hat{\eta}$ be
a pointed $(q-1)$-simplex with underlying $(q-1)$-simplex $\eta$.
Let $N_{\hat{\eta}}$ be the set of vertices $z$ of $\mathcal{BT}$
such that $(\hat{\eta},z)$ is a pointed $q$-simplex. Each
element $z$ in $N_{\hat{\eta}}$ corresponds to a lattice $L_z$
with $L_{q-1}\subsetneq L_z\subsetneq L_0$ where
$(L_0,\dots,L_{q-1})$ represents $\eta$. We call a subset $M_0$ of
$N_{\hat{\eta}}$ stable with respect to $\hat{\eta}$ if for any
two $z,z'$ in $M_0$ the lattice $L_z\cap L_{z'}$ represents an
element in $M_0$, as well. (By Lemma 2.2 in \cite{GK} this is
equivalent to the original definition of stability in the case of
the Bruhat-Tits building.) By Theorem 1.7 in \cite{GK} we need to
verify that for any $1\leq q\leq d$, any pointed $(q-1)$-simplex
$\hat{\eta}$, and any subset $M_0$ of $N_{\hat{\eta}}$ that is
stable with respect to $\hat{\eta}$ the sequence
\begin{equation}
\bigoplus_{\begin{matrix}z,z'\in M_0\\ \{z,z'\}\in\mathcal{BT}_1\end{matrix}}V_{\{z,z'\}\cup\eta}\rightarrow\bigoplus_{z\in M_0}V_{\{z\}\cup\eta}\rightarrow V_{\eta}\label{e5}
\end{equation}
is exact. Since our coefficient system is $P$-equivariant, we may
assume without loss of generality that $\eta=\eta_J$ for some
subset $J\subseteq\{1,\dots,d\}$ with $|J|=q-1$. Let $M_0\subseteq
N_{\hat{\eta_J}}$ be stable with respect to $\hat{\eta_J}$ (here
$\hat{\eta_J}$ corresponds to any fixed vertex of $\eta_J$). Since
the stabilizer of $\eta=\eta_J$ is contained in $P_0Z$, for any
simplex $\nu\supset\eta$ we have $\nu=n_\nu\eta_{J'}$ for some
$J'\supset J$ and $n_\nu$ in $N_0$ stabilizing $\eta$. In
particular, $\mathrm{supp}(V_{\nu})=n_\nu\left(\bigcap_{j\in
J'}\varphi_jN_0\varphi_j^{-1}\right)$. Hence for any $n_0$ in
$N_0$ the coset $n_0\bigcap_{j=1}^d\varphi_jN_0\varphi_j^{-1}$ is
either contained in $\mathrm{supp}(V_{\nu})$ or disjoint from
$\mathrm{supp}(V_{\nu})$. This means that we have
\begin{equation*}
V_\nu=C_c^{\infty}\left(n_{\nu}\bigcap_{j\in J'}\varphi_jN_0\varphi_j^{-1}\right)
=\bigoplus_{n_0\in n_{\nu}\bigcap_{i\in J'}\varphi_iN_0\varphi_i^{-1}/\bigcap_{j=1}^d\varphi_jN_0\varphi_j^{-1}}C_c^{\infty}\left(n_0\bigcap_{j=1}^d\varphi_jN_0\varphi_j^{-1}\right)
\end{equation*}
and it suffices to check the exactness of the restriction of \eqref{e5} to each coset $n_0\bigcap_{j=1}^d\varphi_jN_0\varphi_j^{-1}$. For any fixed $n_0$ we multiply the restriction of \eqref{e5} to the coset $n_0\bigcap_{j=1}^d\varphi_jN_0\varphi_j^{-1}$ by $n_0^{-1}$ and obtain the sequence
\begin{equation}\label{e6}
\bigoplus_{z\neq z'\in n_0^{-1}M_0\cap\{x_0,\dots,x_d\}}C_c^{\infty}(\bigcap_{j=1}^d\varphi_jN_0\varphi_j^{-1})\rightarrow\bigoplus_{z\in n_0^{-1}M_0\cap\{x_0,\dots,x_d\}}C_c^{\infty}(\bigcap_{j=1}^d\varphi_jN_0\varphi_j^{-1})\rightarrow C_c^{\infty}(\bigcap_{j=1}^d\varphi_jN_0\varphi_j^{-1})
\end{equation}
since the condition on $n_0$ lying in $n_{\nu}\bigcap_{i\in J'}\varphi_iN_0\varphi_i^{-1}/\bigcap_{j=1}^d\varphi_jN_0\varphi_j^{-1}$ is equivalent to that $n_0^{-1}\nu$ is a subsimplex of $\{x_0,\dots,x_d\}$. However, \eqref{e6} is clearly exact and the lemma follows.
\end{proof}

\begin{proof}[Proof of Proposition \ref{e16}]
At first we note that Lemma 11.8 in \cite{SV} generalizes to our case with the same proof, ie.\ $D(\ind_{P_0Z}^P(V))=D^0(\ind_{P_0Z}^P(V))$ and $D^i(\ind_{P_0Z}^P(V))=0$ for $i\geq 1$ for any smooth $P$-representation $V$ with central character since $Z\cong\mathbb{Q}_p^{\times}$ here, as well. So by Lemmas \ref{e1} and \ref{e2} (and noting that $Z$ acts trivially on each $V_q$) we may compute
\begin{equation*}
D^i(V_{St})=h^i(D(\bigoplus_{\eta\in\mathcal{BT}_{\bullet}}V_\eta)).
\end{equation*}
By Lemma 2.5 in \cite{SV} it suffices to show that for any $0\leq
q\leq d-1$ and any generating $P_+$-subrepresentation $M_{q+1}$ of
$\ind_{P_0Z}^P(V_{q+1})$ there exists a generating
$P_+$-subrepresentation $M_q$ of $\ind_{P_0Z}^P(V_q)$ such that
$M_q\cap\partial_{q+1}(\ind_{P_0Z}^P(V_{q+1}))\subseteq
\partial_{q+1}(M_{q+1})$. By (the analogue of) Lemma 3.2 in
\cite{SV} (see the proof of Lemma 11.8 in \cite{SV}) we may assume
that $M_{q+1}$ is of the form $M_{q+1}=M_{q+1,\sigma}$ for some
order reversing map $\sigma$ from $T_+/T_0Z$ to $\Sub(V_{q+1})$
satisfying
\begin{equation*}
\bigcup_{t\in T_+/T_0Z}\sigma(t)=V_{q+1}.
\end{equation*}
Here $\Sub(V_{q+1})$ denotes the partially ordered set of $P_0$-subrepresentations of $V_{q+1}$ and
\begin{equation*}
M_{q+1,\sigma}=\bigoplus_{t\in T_+/T_0Z}\ind_{P_0Z}^{N_0tP_0Z}\sigma(t)
\end{equation*}
where $\ind_{P_0Z}^X(V)$ denotes the set of functions with support in $X$ from $P$ to $V$ as a subset of $\ind_{P_0Z}^{P}(V)$ for any $P_0Z$-representation $V$ and $P_0Z$-invariant subset $X$ of $P$.

Moreover, since we have for any $n_0$ in $N_0$
\begin{equation*}
V_{n_0\eta_J}=C_c^{\infty}\left(n_0\bigcap_{j\in J}\varphi_jN_0\varphi_j^{-1}\right)=\bigcup_{n=0}^{\infty}C_c^{\infty}\left(n_0\bigcap_{j\in J}\varphi_jN_0\varphi_j^{-1}/\bigcap_{j'=1}^d\varphi_{j'}N_0^{p^n}\varphi_{j'}^{-1}\right)
\end{equation*}
with finite sets
\begin{equation*}
C_c^{\infty}\left(n_0\bigcap_{j\in J}\varphi_jN_0\varphi_j^{-1}/\bigcap_{j'=1}^d\varphi_{j'}N_0^{p^n}\varphi_{j'}^{-1}\right),
\end{equation*}
we may further assume (making $M_{q+1}$ possibly even smaller)
that $\sigma$ is induced by an unbounded order reversing map
$\sigma_0\colon T_+/T_0Z\rightarrow \mathbb{N}\cup \{-1\}$ with
\begin{equation*}
\sigma(t)=\sum_{\begin{matrix}n_0\in N_0\text{, }|J|=q+1\\ J\subseteq\{1,\dots,d\}\end{matrix}}V_{n_0\eta_J}(\sigma_0(t))
\end{equation*}
where
\begin{equation}
V_{n_0\eta_J}(\sigma_0(t)):=C_c^{\infty}\left(n_0\bigcap_{j\in J}\varphi_jN_0\varphi_j^{-1}/\bigcap_{j'=1}^d\varphi_{j'}N_0^{p^{\sigma_0(t)}}\varphi_{j'}^{-1}\right)\label{e7}
\end{equation}
for $\sigma_0(t)\geq 0$ and $V_{n_0\eta_J}(-1):=0$. Now we put
\begin{equation*}
M_q:=M_{q,\sigma_0}:=\bigoplus_{t\in T_+/T_0Z}\ind_{P_0Z}^{N_0tP_0Z}\sum_{\begin{matrix}n_0\in N_0\text{, }|J|=q\\ J\subseteq\{1,\dots,d\}\end{matrix}}V_{n_0\eta_J}(\sigma_0(t))
\end{equation*}
with $V_{n_0\eta_J}(\sigma_0(t))$ defined as in \eqref{e7}. We claim that
\begin{equation}
M_q\cap \partial_{q+1}(\ind_{P_0Z}^P(V_{q+1}))=\partial_{q+1}(M_{q+1}).\label{e8}
\end{equation}
We now distinguish two cases whether $q=0$ or bigger. In the case
$q>0$ the proof of \eqref{e8} is completely analogous to that of
Lemma \ref{e2}. We see by construction that
$\partial_{q+1}(M_{q+1})\subseteq M_q$. Hence we have the following
coefficient system on $\mathcal{BT}$ concentrated in degrees
$q+1$, $q$, and $q-1$. In degrees $q+1$ and $q$ we put $M_{q+1}$
and $M_q$, respectively as subspaces of
$\bigoplus_{\eta\in\mathcal{BT}_{q+1}}V_{\eta}=\ind_{P_0Z}^P(V_{q+1})$
and
$\bigoplus_{\eta\in\mathcal{BT}_q}V_{\eta}=\ind_{P_0Z}^P(V_q)$,
respectively. Indeed, we have by construction
\begin{eqnarray*}
M_{q+1}&=&\bigoplus_{\eta\in\mathcal{BT}_{q+1}}M_{q+1}\cap V_{\eta};\\
M_q&=&\bigoplus_{\eta\in\mathcal{BT}_q}M_q\cap V_{\eta}.
\end{eqnarray*}
In degree $q-1$ we put the whole $\ind_{P_0Z}^P(V_{q-1})$. We use
Grosse-Kl\"onne's criterion in order to show that the sequence
\begin{equation*}
M_{q+1}\rightarrow M_q\rightarrow \ind_{P_0Z}^P(V_{q-1})
\end{equation*}
is exact which implies \eqref{e8} as the kernel of the map from
$M_q$ to $\ind_{P_0Z}^P(V_{q-1})$ is exactly the left hand side of
\eqref{e8} by Lemma \ref{e2}. The proof proceeds the same way as
in Lemma \ref{e2}, but here all the functions are constant modulo
the subgroup
$\bigcap_{j=1}^d\varphi_jN_0^{p^{\sigma_0(t)}}\varphi_j^{-1}$
where $t$ only depends on $\eta$ (except for the case $\sigma_0(t)=-1$ whence all the functions are zero and the exactness is trivial). The sequence \eqref{e6}
remains exact if we replace $C_c^{\infty}(\bigcap_{j=1}^d\varphi_j
N_0\varphi_j^{-1})$ by
\begin{equation*}
C_c^{\infty}\left(\bigcap_{j=1}^d\varphi_j N_0\varphi_j^{-1}/\bigcap_{j=1}^d\varphi_j N_0^{p^{\sigma_0(t)}}\varphi_j^{-1}\right)
\end{equation*}
hence the statement.

For $q=0$ we have to be a bit more careful, since the inductional
argument in the proof of Lemma \ref{e2} does not work here as it
is not true that any $v$ in $M_0\cap V_{n_0tx_0}$ is equivalent to
some $v_0$ in $M_0\cap V_{x_0}$ modulo $\partial_{1}(M_{1})$. (Note that $M_0\cap V_{x_0}=V_{x_{0}}(\sigma_0(1))$ but  $M_0\cap V_{n_0tx_0}=V_{n_0tx_0}(\sigma_0(t))$ and $\sigma_0(t)$ could be much bigger than $\sigma_0(1)$.) However, we claim
that for any $v_t$ in $M_0\cap V_{n_0tx_0}$ with $n_0$ in $N_0$
and any $t'\leq t$ in $T_+$ there exists an element $v_{t'}$ in
\begin{equation*}
M_0\cap\left(\bigoplus_{n_1\in
N_0/t'N_0t'^{-1}}V_{n_1t'x_0}\right)
\end{equation*}
such that $v_t-v_{t'}$ lies in $\partial_1(M_1)$. The statement is
derived from this the following way. Any element $m$ in $M_0$ is
supported on finitely many vertices $\{b_it_ix_0\}_{i=1}^l$ of
$\mathcal{BT}$ with $t_i$ in $T_+$ and $b_i$ in $N_0$. Moreover,
there is a common $t'$ in $T_+$ with $t'\leq t_i$ for any $1\leq
i\leq l$. Now if $m$ lies in
$M_0\cap\partial_1(\ind_{P_0Z}^P(V_1))$ then by our claim there
exists an $m'$ in
\begin{equation}
M_0\cap\left(\bigoplus_{n_1\in
N_0/t'N_0t'^{-1}}V_{n_1t'x_0}\right)\label{e9}
\end{equation}
such that $m-m'$ lies in $\partial_1(M_1)$. However, the map from
\eqref{e9} to $V_{St}$ is injective since the supports of
functions in $V_{n_1t'x_0}$ and in $V_{n_1't'x_0}$ are disjoint
for $n_1n_1'^{-1}$ not in $t'N_0t'^{-1}$. It follows that $m'=0$
hence $m$ is in $\partial_1(M_1)$.

For the proof of the claim let $v_t$ be in $M_0\cap V_{n_0tx_0}$
for some $n_0$ in $N_0$ and $t$ in $T_+$. Then by definition of
$V_{n_0tx_0}$ the function $v_t$ is supported on
\begin{equation}
n_0tN_0t^{-1}=\bigcup^{\cdot}_{n_1\in
tN_0t^{-1}/t'N_0t'^{-1}}n_0n_1t'N_0t'^{-1}\label{e10}
\end{equation}
since $t'\leq t$ implies $t'N_0t'^{-1}\subseteq tN_0t^{-1}$.
Moreover, $v_t$ is constant on the cosets of
\begin{equation*}
t\left(\bigcap_{j=1}^{d}\varphi_jN_0^{p^{\sigma_0(t)}}\varphi_j^{-1}\right)t^{-1}
\end{equation*}
by the definition of $M_0$. We may assume by induction that
$t'=t\varphi_i$ for some $1\leq i\leq d$. Hence for any $n_1$ in
$tN_0t^{-1}/t\varphi_iN_0\varphi_i^{-1}t^{-1}$ the pair
$\{x_0,t^{-1}n_1t\varphi_ix_0\}$ represents an edge of
$\mathcal{BT}$. Therefore we have
\begin{equation}\label{e11}
M_1\cap
V_{\{n_0tx_0,n_0n_1t\varphi_ix_0\}}=C_c^{\infty}(n_0n_1(t\varphi_iN_0\varphi_i^{-1}t^{-1}/t\bigcap_{j=1}^d\varphi_jN_0^{p^{\sigma_0(t)}}\varphi_j^{-1}t^{-1}))
\end{equation}
and the map
\begin{equation*}
\pi_{n_0tx_0}\circ\partial_1\colon M_1\cap \left(\bigoplus_{n_1\in
tN_0t^{-1}/t'N_0t'^{-1}}V_{\{n_0tx_0,n_0n_1t\varphi_ix_0\}}\right)\rightarrow
M_0\cap V_{n_0tx_0}
\end{equation*}
is surjective comparing \eqref{e10} and \eqref{e11}. (Here
$\pi_{n_0tx_0}$ denotes the projection of $M_0$ onto $M_0\cap
V_{n_0tx_0}$.) The claim follows noting that
\begin{equation*}
\partial_1(M_1\cap
V_{\{n_0tx_0,n_0n_1t\varphi_ix_0\}})\subseteq
M_0\cap(V_{n_0tx_0}\oplus V_{n_0n_1t\varphi_ix_0}).
\end{equation*}
\end{proof}

The following is an immediate corollary of Remark 6.4 in \cite{SV} using Proposition \ref{e16}.

\begin{cor}
The natural transformation $a_V$ defined in section 6 of \cite{SV} gives an isomorphism
\begin{equation*}
a_{V_{St}}\colon V_{St}^*\rightarrow\psi^{-\infty}(D^0(V_{St})).
\end{equation*}
\end{cor}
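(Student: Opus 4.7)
The plan is to invoke the criterion supplied by Remark 6.4 of \cite{SV}, which gives a sufficient condition for the natural transformation $a_V\colon V^*\to\psi^{-\infty}(D^0(V))$ to be an isomorphism: namely, that $V$ be acyclic for the functor $D$, i.e.\ that the augmentation $D(V)=D^0(V)$ hold and that the higher derived functors $D^i(V)$ vanish for all $i\geq 1$. Once such a criterion is available, the proof is purely a matter of verifying its hypotheses for $V=V_{St}$.

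First I would recall the construction of $a_V$ from section 6 of \cite{SV}, just enough to see that it is natural in $V$ and that the formalism of Remark 6.4 applies in our setting (compact open $N_0$, monoid $P_+=N_0T_+$, the standard Borel in $\GL_{d+1}(\mathbb{Q}_p)$). No reworking of that construction is required, since we only need the implication recorded in Remark 6.4.

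Next I would simply cite Proposition \ref{e16}, which provides exactly the needed input: it asserts $D^0(V_{St})=D(V_{St})=\Lambda(N_0)/\pi\Lambda(N_0)$ and $D^i(V_{St})=0$ for every $i\geq 1$. In particular $V_{St}$ is $D$-acyclic, so the hypotheses of Remark 6.4 are satisfied.

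Combining these two ingredients yields the conclusion that $a_{V_{St}}\colon V_{St}^*\to\psi^{-\infty}(D^0(V_{St}))$ is an isomorphism. There is no genuine obstacle here: the substantial work has already been carried out in the resolution of $V_{St}$ by the coefficient system on the Bruhat--Tits building and the computation of the cohomology of $D$ applied to it in the proof of Proposition \ref{e16}. The present corollary is therefore essentially a bookkeeping step, noting that \emph{acyclicity} for $D$ is precisely the hypothesis which Remark 6.4 of \cite{SV} converts into the bijectivity of $a_V$.
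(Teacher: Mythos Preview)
Your proposal is correct and follows exactly the same approach as the paper: the corollary is stated there as an immediate consequence of Remark 6.4 in \cite{SV} together with Proposition \ref{e16}, which is precisely what you do.
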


\begin{rem}
Proposition \ref{e16} (and also Lemmas \ref{e1} and \ref{e2}) remain true in the following more general setting with basically the same proof. Let $G$ still be $\GL_{d+1}(\mathbb{Q}_p)$ and $V$ be a smooth $o$-torsion $P$-representation with a unique minimal generating $P_+$-subrepresentation $M$. Assume further that $nM\cap M=0$ for any $n$ in $N\setminus N_0$. Then we have $D^0(V)=D(V)$ and $D^i(V)=0$ for all $i\geq1$.
\end{rem}

\section*{Acknowledgement}

I gratefully acknowledge the financial support and the hospitality of the Max Planck Institute for Mathematics, Bonn. I would like to thank Peter Schneider for his constant interest in my work and for his valueable comments and suggestions. I would also like to thank Marie-France Vigneras for reading the manuscript and for her comments.

\begin{flushleft}Gergely Z\'abr\'adi\\
E\"otv\"os Lor\'and University, Mathematical Institute\\
Department of Algebra and Number Theory\\
Budapest\\
P\'azm\'any P\'eter s\'et\'any 1/C\\
H-1117\\
Hungary\\
zger@cs.elte.hu
\end{flushleft}
\end{document}